\DeclareFontFamily{U}{rcjhbltx}{}
\DeclareFontShape{U}{rcjhbltx}{m}{n}{<->rcjhbltx}{}
\DeclareSymbolFont{hebrewletters}{U}{rcjhbltx}{m}{n}
\DeclareMathSymbol{\shin}{\mathord}{hebrewletters}{152}
\newcommand{\ME}[1]{\textcolor{black}{#1}}
\newcommand{\JL}[1]{\textcolor{black}{#1}}
\newcommand{\svw}[1]{\textcolor{black}{#1}}
\newcommand{\finv}[1]{\textcolor{black}{#1}}
\newcommand{\refy}[1]{\textcolor{black}{#1}}
\newcommand{\refz}[1]{\textcolor{black}{#1}}
\theoremstyle{definition}
\newtheorem{theorem}{Theorem}[section]
\newtheorem*{theorem*}{Theorem}
\newtheorem{proposition}[theorem]{Proposition}
\newtheorem{lemma}[theorem]{Lemma}
\newtheorem{corollary}[theorem]{Corollary}
\newtheorem{definition}[theorem]{Definition}
\newtheorem{example}[theorem]{Example}
\theoremstyle{remark}
\numberwithin{equation}{section}
\newcommand{\bQ}{\mathbb{Q}}
\newcommand{\suchthat}{\,:\,}
\newcommand{\spam}{\operatorname{span}}
\newcommand{\Sym}{\ensuremath{\operatorname{Sym}}}
\newcommand{\NSym}{\ensuremath{\operatorname{NSym}}}
\newcommand{\QSym}{\ensuremath{\operatorname{QSym}}}
\newcommand{\dI}{\mathfrak{S}^*} 
\newcommand{\rdI}{\mathcal{R}\mathfrak{S}^*} 
\newcommand{\ex}{\mathcal{E}} 
\newcommand{\rex}{\mathcal{RE}} 
\newcommand{\Sk}{\mathscr{S}} 
\newcommand{\TdI}{\mathcal{T}_{\alpha/\beta}({\text{1st col}<, \text{rows}\leq})}
\newcommand{\TrdI}{\mathcal{T}_{\alpha/\beta}({\text{1st col}\leq, \text{rows}<})}
\newcommand{\Tex}{\mathcal{T}_{\alpha/\lambda}({\text{cols}<, \text{rows}\leq})}
\newcommand{\Tssy}{\mathcal{T}_{\lambda/\mu}({\text{cols}<, \text{rows}\leq})}
\newcommand{\Trex}{\mathcal{T}_{\alpha/\lambda}({\text{cols}\leq, \text{rows}<})}
\newcommand{\TadI}{\mathcal{T}_{\alpha/\beta}({\text{1st col}<, \text{rows}<})}
\newcommand{\TradI}{\mathcal{T}_{\alpha/\beta}({\text{1st col}\leq, \text{rows}\leq})}
\newcommand{\Tsex}{\mathcal{T}_{\alpha/\lambda}({\text{cols}<, \text{rows}<})}
\newcommand{\Twex}{\mathcal{T}_{\alpha/\lambda}({\text{cols}\leq, \text{rows}\leq})}
\newcommand{\xT}{\mathbf{x}^T}
\DeclareMathOperator{\Des}{Des}
\DeclareMathOperator{\comp}{comp}
\DeclareMathOperator{\asc}{asc}
\DeclareMathOperator{\bA}{\overline{\mathcal{A}}} 
\DeclareMathOperator{\cA}{{\mathcal{A}}}
\newcommand{\refines}{\preccurlyeq}
\newcommand{\coarsens}{\succcurlyeq}
\newlength\cellsize \setlength\cellsize{15\unitlength}
\newcommand\cellify[1]{\def\thearg{#1}\def\nothing{}%
\ifx\thearg\nothing
\vrule width0pt height\cellsize depth0pt\else
\hbox to 0pt{\usebox2\hss}\fi%
\vbox to 15\unitlength{
\vss
\hbox to 15\unitlength{\hss$#1$\hss}
\vss}}
\newcommand\tableau[1]{\vtop{\let\\=\cr
\setlength\baselineskip{-16000pt}
\setlength\lineskiplimit{16000pt}
\setlength\lineskip{0pt}
\halign{&\cellify{##}\cr#1\crcr}}}
\newcommand\expath[1]{%
\hbox to 0pt{\usebox3\hss}%
\vbox to 15\unitlength{
\vss
\hbox to 15\unitlength{\hss$#1$\hss}
\vss}}
\newcommand\bas[1]{\omit \vbox to \cellsize{ \vss \hbox to \cellsize{\hss$#1$\hss} \vss}}
\begin{document}

\title[Symmetric quasisymmetric Schur-like functions]{Symmetric quasisymmetric Schur-like functions}

\author{Maria Esipova}
\address{
 \refy{Department of Mathematics and Statistics,
 McGill University,
 Montreal QC H3A 0B9, Canada}}
\email{\refy{maria.esipova@mail.mcgill.ca}}

\author{Jinting Liang}
\address{
 Department of Mathematics,
 University of British Columbia,
 Vancouver BC V6T 1Z2, Canada}
\email{liangj@math.ubc.ca}

\author{Stephanie van Willigenburg}
\address{
 Department of Mathematics,
 University of British Columbia,
 Vancouver BC V6T 1Z2, Canada}
\email{steph@math.ubc.ca}

\thanks{All authors were supported  in part by the Natural Sciences and Engineering Research Council of Canada.}
\subjclass[2020]{05A15, 05E05, 16T30, 16W55}
\keywords{dual immaculate function, extended Schur function, skew Schur function, symmetric function}
\begin{abstract} In this paper we classify when (row-strict) dual immaculate functions and (row-strict) extended Schur functions, \ME{as well as} their skew generalizations, are symmetric. We also classify when \ME{their natural variants,} termed advanced functions, are symmetric. In every case our classification recovers classical skew Schur functions.
\end{abstract}
\maketitle
\section{Introduction}\label{sec:intro}  
The area of quasisymmetric Schur-like functions had its genesis in quasisymmetric Schur functions \cite{QS}, which naturally arose in the study of Macdonald polynomials and refined both Schur functions and their properties. These functions were then applied to resolve whether the algebra of \refy{quasisymmetric} functions, $\QSym$, over the algebra of symmetric functions, $\Sym$, had a stable basis \cite{LM2011}. This is just one example of the plethora of results in which quasisymmetric functions resolved long-standing conjectures on symmetric functions, including the celebrated Shuffle Theorem \cite{CM2018}. Schur-like functions now form a highly active research area, expanding beyond quasisymmetric functions \cite{ALvW2022, monical}, while the foundational research in $\QSym$ and their dual $\NSym$ remains central \svw{\cite{AS2019, allenetal, BBSSZ2014, BLvW, BTvW, Campbell2014, D2024, QS, LOPSvWW25, LM2011, LMvW, mason-remmel, NSvWVW2023, NSvWVW2024}.}

One natural question is when  these quasisymmetric Schur-like functions \refz{are} symmetric. For the original quasisymmetric Schur functions this was classified \refy{by Bessenrodt et al.} \cite{BTvW}, while for another type of Schur-like function, known as the dual immaculate  functions, the classification for those functions indexed by a composition $\alpha$ was revealed \refy{by Allen et al.} \cite[Proposition 4.7]{allenetal}. In this \refy{paper,} we complete and expand their classification by classifying when (row-strict) dual immaculate functions, (row-strict) extended Schur functions and their advanced variants are symmetric for all diagrams $\alpha/\beta$. More \refy{precisely,} our paper is structured as follows.

In Section~\ref{sec:background} we review the necessary background. Then in Section~\ref{sec:symdI} we classify when dual immaculate functions are symmetric in Theorem~\ref{the:symdI}, and similarly when row-strict dual immaculate functions are symmetric in Theorem~\ref{the:symrdI}. Section~\ref{sec:symex} continues our classification, this time for extended Schur functions in Theorem~\ref{the:symex} and row-strict extended Schur functions in Theorem~\ref{the:symrex}. We complete our classification in Section~\ref{sec:symad} with strictly and weakly \svw{advanced  functions} in Theorems~\ref{the:symadI} and \ref{the:symradI}, and strictly and weakly \svw{advanced extended functions} in Theorems~\ref{the:symsex} and \ref{the:symwex}. \svw{We discover in each case that \ME{when these functions are symmetric} we recover particular skew Schur functions, in Corollaries~\ref{cor:symdI}, \ref{cor:symrdI},  \ref{cor:symex}, \ref{cor:symrex}, \finv{\ref{cor:symadI},} \ref{cor:symradI},  \ref{cor:symsex}, and \ref{cor:symwex}, respectively.} \refy{Finally,} in Section~\ref{sec:skew} we end by giving a general combinatorial formula for skew functions in $\QSym$ in Proposition~\ref{prop:skews}.
\section{Background}\label{sec:background}
\subsection{Compositions and partitions}\label{subsec:comps}
We say that a \emph{composition} $\alpha = (\alpha_1, \ldots, \alpha_{\ell(\alpha)})$ of $n$ is a list of positive integers such that $\sum _{i=1} ^{\ell(\alpha)} \alpha _i = n$. We denote this by $\alpha \vDash n$. \ME{We} call the $\alpha _i$ the \emph{parts} of $\alpha$, 
$|\alpha|$ the \emph{size} of $\alpha$, and $\ell(\alpha)$ the \emph{length} of $\alpha$. If $\alpha _{j+1}= \cdots = \alpha _{j+m} = i$ we often abbreviate this to $i^m$ and denote the empty composition of 0 by $\emptyset$. We say a composition $\alpha$ is a \emph{partition} if $\alpha _1\geq \cdots \geq \alpha _{\ell(\alpha)}$. We denote this by $\alpha\vdash n$. Note that every composition $\alpha$ determines a partition $\lambda(\alpha)$ by reordering the parts of $\alpha$ in weakly decreasing order. We say that a composition $\beta$ is a \emph{refinement} of $\alpha$, denoted by $\beta\refines \alpha$, if the parts of $\alpha$ \svw{in order} can be obtained by adding together adjacent parts of $\beta$ in order.

\begin{example}\label{ex:refine}
If $\alpha = (3,4,4,3,1)$, then $|\alpha|=15$, $\ell(\alpha) = 5$, $\lambda(\alpha)=(4,4,3,3,1)$ and $(3,2,2,4,3,1)\refines \alpha$.
\end{example}

Given a composition $\alpha$, we say that its \emph{diagram}, also denoted by $\alpha$, is the array of left-justified cells with $\alpha_i$ cells in row $i$ from the \ME{\emph{bottom.}} Given compositions $\alpha, \beta$ we say that $\beta \subseteq \alpha$ if $\beta _i\leq \alpha _i$ for all $1\leq i\leq\ell(\beta)\leq \ell(\alpha)$. Moreover, if $\beta \subseteq \alpha$ then we say that the \emph{(skew) diagram} $\alpha / \beta$ is the array of cells in $\alpha$ but not $\beta$ when $\beta$ is placed in the bottom-left corner of $\alpha$. We omit the word skew if the context is clear, let $|\alpha/\beta| = |\alpha|-|\beta|$, and if $\beta = \emptyset$ then we refer to $\alpha/\beta$ as simply $\alpha$. We say $\alpha/\beta$ is \emph{connected} if we cannot partition the rows of $\alpha/\beta$ into two sets $R_1$ and $R_2$ such that the columns spanned by the cells in the rows of $R_1$ and the rows \svw{of} $R_2$ are disjoint.   We say a skew diagram is an \emph{extended} skew diagram if
\begin{enumerate}
\item $\beta \subseteq \alpha$, and
\item if $\beta _i < \alpha _i$ where $1\leq i \leq \ell (\beta)$, then $\beta _i \geq \beta _j$ for all $i \leq j \leq \ell (\beta)$.
\end{enumerate}
 Informally, this latter condition says that if a cell belongs to $\beta$ then every cell of $\alpha$ below it in that column also belongs to $\beta$. \refz{A reduction at the end of this section will result in us only needing to consider $\beta$ being a partition in this paper.}

Given $\alpha/\beta$ we say a \emph{tableau} $T$ of \emph{shape} $\alpha/\beta$ is a filling of the cells of $\alpha/\beta$ with positive integers, often subject to certain conditions. Moreover, $T$ is a \emph{standard} tableau if the integers $1, \ldots , |\alpha/\beta|$ each appear exactly once. If $c_i(T)$ denotes the number of times $i$ appears in $T$, then we call
$$\ME{c(T) = (c_1(T),  c_{2}(T), \ldots)}$$the \emph{content} of $T$, and given commuting variables $x_1, x_2, \ldots$ let
$$\ME{\xT = x_1^{c_1(T)}x_{2}^{c_{2}(T)}\cdots}$$be the monomial corresponding to $T$. \ME{Let $T(i,j)$ refer to the entry of the cell \finv{$(i,j)$} in row $i$ and column $j$.}

\begin{example}\label{ex:diagrams} We have that $(3,4,4,3,1)/(2,1,2)$ on the left is a diagram, while we have that $(3,4,4,3,1)/(2,2, 1)$ in the middle is an extended diagram, and both are connected. However, $(3,4,4,1,1)/(2,1,2)$ on the right is not connected.
$$\begin{ytableau}
\\
&&\\
\none&\none&&\\
\none&&&\\
\none&\none&\\
\end{ytableau}\qquad
\begin{ytableau}
\\
&&\\
\none&&&\\
\none&\none&&\\
\none&\none&\\
\end{ytableau}\qquad
\begin{ytableau}
\ \\
\ \\
\none&\none&&\\
\none&&&\\
\none&\none&\\
\end{ytableau}$$
Meanwhile for $T$ below, $c(T) = (1,3,3,2,1)$, \svw{with $\xT = x_1x_2^3x_3^3x_4^2x_5$.}
$$T= \begin{ytableau}
5\\
2&2&3\\
\none&\none&3&4\\
\none&1&2&4\\
\none&\none&3
\end{ytableau}$$
\end{example}

\subsection{Symmetric and quasisymmetric functions}\label{subsec:symqsym}
We now introduce the Hopf algebras that will be central to our classification. The \emph{Hopf algebra of symmetric functions}, $\Sym$, is the graded Hopf algebra
$$\Sym = \Sym ^0 \oplus \Sym ^1 \oplus \cdots \subset \bQ [[x_1, x_2, \ldots ]]$$where {$[[\cdot ]]$ means that the variables commute,} $\Sym ^0 = \spam \{1\}$ and the $n$th graded piece for $n\geq 1$ is spanned by the basis of monomial symmetric functions, defined as follows. Given $\lambda = (\lambda_1,\ldots ,\lambda_{\ell(\lambda)})\vdash n$, the \emph{monomial symmetric function}, $m_\lambda$, is given by
$$m_\lambda = \sum x_{i_1}^{\lambda _1}\cdots x_{i_{\ell(\lambda)}}^{\lambda _{\ell(\lambda)}}$$summed over \ME{distinct $i_j$ and distinct monomials. We let $m_\emptyset = 1$. Then for $n\geq 0$,
$$\Sym ^n = \spam\{ m_\lambda \suchthat \lambda \vdash n\}.$$}

\begin{example}\label{ex:monomials}
\JL{We have that $m_{(2,1)} = x_1^2x_2 + \finv{x_1x_2^2} + \cdots$.}
\end{example}

The functions that will appear in our classifications can be defined as generating functions of semistandard Young tableaux, which we now define.

\begin{definition}\label{def:ssyt} Let $\lambda, \mu$ be partitions such that $\mu \subseteq \lambda$. Then $$\Tssy$$is the set of all tableaux, called \emph{semistandard Young tableaux}, of  {shape} $\lambda/\mu$ such that
\begin{enumerate}
\item the column entries strictly increase from  bottom to top; 
\item the row entries weakly increase  from left to right.
\end{enumerate}
\end{definition}

\begin{example}\label{ex:ssyt} The following tableau belongs to $\mathcal{T}_{(4,4,3,3,1)/(2,2,1)}({\text{cols}<, \text{rows}\leq})$.
$$\begin{ytableau}
4\\
2&4&5\\
\none&3&3\\
\none&\none&2&3\\
\none&\none&1&2
\end{ytableau}$$
\end{example}

Then for partitions $\lambda, \mu$ such that $\mu \subseteq \lambda$ the \emph{skew Schur function} $s _{\lambda/\mu}$ is given by
\begin{equation}\label{eq:schur}
s _{\lambda/\mu} = \sum _{T\in \Tssy} \xT = \sum _{\nu \vdash |\lambda/\mu|} K _{(\lambda/\mu)\nu} m_\nu
\end{equation}where $K _{(\lambda/\mu)\nu}$ is the \emph{Kostka number} and is the number of $T\in \Tssy$ with content $\nu$. The second equality \refz{is well known.} When $\mu = \emptyset$, we call $s_\lambda$ a \emph{Schur function}.

We now define the \emph{Hopf algebra of quasisymmetric functions}, $\QSym$, which is the graded Hopf algebra
$$\QSym = \QSym ^0 \oplus \QSym ^1 \oplus \cdots \subset \bQ [[x_1, x_2, \ldots ]]$$where $\QSym ^0 = \spam \{1\}$ and the $n$th graded piece for $n\geq 1$ is spanned by the basis of monomial quasisymmetric functions, defined as follows. Given $\alpha = (\alpha_1, \ldots ,\alpha_{\ell(\alpha)})\vDash n$, the \emph{monomial quasisymmetric function}, $M_\alpha$, is given by
$$M_\alpha = \sum _{i_1<\cdots<i_{\ell(\alpha)}} x_{i_1}^{\alpha _1}\cdots x_{i_{\ell(\alpha)}}^{\alpha _{\ell(\alpha)}}.$$\ME{We let $M_\emptyset = 1$. Then for $n\geq 0$,
$$\QSym ^n = \spam\{ M_\alpha \suchthat \alpha \vDash n\}.$$}Observe that by definition
$$m_\lambda = \sum _{\alpha \atop \lambda(\alpha)= \lambda} M_\alpha$$and hence if a function $f\in \QSym$ also satisfies $f\in \Sym$ then, letting $[M_\alpha]f$ denote the coefficient of $M_\alpha$ in $f$, it must be that $[M_\alpha]f= [M_{\alpha '}]f$ for all \ME{$\alpha, \alpha '$ such that} $\lambda(\alpha)=\lambda(\alpha ')$. \ME{The converse also holds.} This observation will be used repeatedly to derive our classifications.

\begin{example}\label{ex:qmonomials}
\JL{We have that $m_{(2,1)} = M_{(2,1)}+M_{(1,2)}$.}
\end{example}

Another central tool will be the involution $\psi$ on $\QSym$, defined on the monomial quasisymmetric function $M_\alpha$ by \cite[Section 5]{E96}
$$\psi(M_\alpha) = (-1)^{|\alpha|-\ell(\alpha)}\sum _{\beta \atop \svw{\beta \coarsens \alpha}} M_\beta$$that on a skew Schur function $s_{\lambda/\mu}$ becomes
\begin{equation}\label{eq:transpose}\psi(s_{\lambda/\mu})=s_{(\lambda/\mu)^t}\end{equation}where  the \emph{transpose} of $\lambda/\mu$ \finv{for partitions $\lambda, \mu$,} denoted by $(\lambda/\mu)^t$, is $\lambda/\mu$ with the rows and columns exchanged. \refz{Note from this equation, it follows that $\psi$ reduces to the well-known involution $\omega$ on $\Sym$.}

\begin{example}\label{ex:transpose} We have that $\left( (4,4,3,3,1)/(2,2,1) \right) ^t$ is the following.
$$\begin{ytableau}
\ &\ \\
&&&\\
\none&\none& & \\
\none&\none&\none&&\\
\end{ytableau}$$
\end{example}

Finally, we reduce the number of cases we need to consider classifying. Each of our eight quasisymmetric Schur-like functions will be defined as a generating function of specific tableaux, where restrictions are on the columns and on the rows. In particular, given compositions $\alpha, \beta$ with $\beta\subseteq\alpha$, if a diagram $\alpha/\beta$ is disconnected, then we can partition the rows of $\alpha/\beta$ into two sets $R_1, R_2$ such that the columns spanned by the cells in the rows of $R_1$ and the rows \svw{of} $R_2$ are disjoint. \refy{Thus,} the cells in $R_1$ and $R_2$ can be filled independently, and hence we can factor the generating function \refz{ as a product over the connected components of $\alpha/\beta$.} \ME{By the following lemma,} for our \finv{classification} we need only consider \emph{connected} $\alpha/\beta$.

\begin{lemma}[\svw{\cite[Corollary 3.8]{gps}}]\label{lem:symprod}
    Suppose $f=g\cdot h$ for some $f,g,h\in \mathbb{Q}[[x_1,x_2,\ldots]]$. Then $f$ is symmetric if and only if $g$ and $h$ are both symmetric. 
\end{lemma}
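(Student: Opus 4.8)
The plan is as follows. The backward implication is immediate: $\Sym$ is a subring of $\mathbb{Q}[[x_1,x_2,\ldots]]$, so if $g$ and $h$ are symmetric then so is $f=g\cdot h$. For the forward implication I would reduce to finitely many variables. A boundedness hypothesis is needed for this (for instance $\bigl(\prod_{i\text{ odd}}(1-x_i)^{-1}\bigr)\bigl(\prod_{i\text{ even}}(1-x_i)^{-1}\bigr)=\prod_{i\ge1}(1-x_i)^{-1}$ is a symmetric product of two non-symmetric power series of unbounded degree), and it is always available in the applications, since every function to which we apply the lemma is homogeneous of degree $|\alpha/\beta|$; so I would assume $f,g,h$ have bounded degree, and---discarding the trivial case $f=0$, in which $g=0$ or $h=0$---that $f\neq0$. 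For each integer $N$ let $\phi_N\colon\mathbb{Q}[[x_1,x_2,\ldots]]\to\mathbb{Q}[x_1,\ldots,x_N]$ be the ring homomorphism with $\phi_N(x_i)=x_i$ for $i\le N$ and $\phi_N(x_i)=0$ for $i>N$. Since each monomial and each adjacent transposition involve only finitely many variables, $g$ is symmetric if and only if $\phi_N(g)$ is symmetric for all large $N$, and likewise for $h$; moreover $\phi_N(g)\phi_N(h)=\phi_N(f)$ is symmetric and $\deg\phi_N(g)+\deg\phi_N(h)\le\deg f$. So the proof reduces to the finite-variable statement, to be invoked with $N>\max\{4,\deg f\}$: \emph{if $N\ge5$ and $P,Q\in\mathbb{Q}[x_1,\ldots,x_N]$ satisfy $PQ\neq0$, $PQ$ symmetric, and $\deg P+\deg Q<N$, then $P$ and $Q$ are symmetric.}

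To prove this I would work in the unique factorization domain $\mathbb{Q}[x_1,\ldots,x_N]$ and suppose for contradiction that $P$ is not symmetric. If every irreducible factor of $P$ were $\Sn_N$-semi-invariant, then $P$ itself would be semi-invariant, hence symmetric or antisymmetric (a homomorphism $\Sn_N\to\mathbb{Q}^\times$ factors through $\Sn_N^{\mathrm{ab}}\cong\mathbb{Z}/2$); being non-symmetric it would then be antisymmetric, hence divisible by the Vandermonde determinant $\Delta$, whose degree $\binom{N}{2}$ exceeds $\deg P$ since $\deg P<N\le\binom{N}{2}$---absurd. So $P$ has an irreducible factor $\pi$ whose stabilizer $H$ of the line $\mathbb{Q}\pi$ is a proper subgroup of $\Sn_N$. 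For $N\ge5$ the only subgroups of $\Sn_N$ of index less than $N$ are $\Sn_N$ and $\mathfrak A_N$, so either $[\Sn_N:H]\ge N$ or $H=\mathfrak A_N$. In the first case the $\Sn_N$-orbit of $\pi$ yields at least $N$ pairwise non-associate irreducible factors of the symmetric polynomial $PQ$; their product divides $PQ$ and has degree at least $N$, contradicting $\deg PQ=\deg P+\deg Q<N$. In the second case $\pi$ is $\mathfrak A_N$-invariant ($\mathfrak A_N$ being perfect for $N\ge5$) but not $\Sn_N$-invariant; since $\mathbb{Q}[x_1,\ldots,x_N]^{\mathfrak A_N}=\mathbb{Q}[e_1,\ldots,e_N]\oplus\Delta\,\mathbb{Q}[e_1,\ldots,e_N]$, writing $\pi=a+\Delta b$ with $a,b$ symmetric and $b\neq0$ gives $\pi-\sigma(\pi)=2\Delta b\neq0$ for any odd $\sigma$, so $\deg\pi\ge\binom{N}{2}>\deg P\ge\deg\pi$---absurd again. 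Hence $P$ is symmetric, and interchanging $P$ and $Q$ shows $Q$ is symmetric as well.

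The crux is this finite-variable statement. It fails without a constraint tying $N$ to the degrees---in $\mathbb{Q}[x_1,x_2]$ the product $(x_1-x_2)(x_1-x_2)$ is symmetric---so the argument must genuinely use that $N$ is large compared with $\deg P+\deg Q$: that is exactly what forces a stray non-symmetric irreducible factor to have either an overly large $\Sn_N$-orbit or a stabilizer equal to $\mathfrak A_N$, in which case it is divisible by the high-degree Vandermonde. The group-theoretic inputs---that $\Sn_N$ has no subgroup of index strictly between $2$ and $N$ when $N\ge5$, and the structure of the $\mathfrak A_N$-invariants of $\mathbb{Q}[x_1,\ldots,x_N]$---carry the real content; the passage between $\mathbb{Q}[[x_1,x_2,\ldots]]$ and $\mathbb{Q}[x_1,\ldots,x_N]$ is routine bookkeeping once degrees are bounded.
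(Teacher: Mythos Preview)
The paper does not prove this lemma; it simply cites \cite[Corollary 3.8]{gps}. So there is no ``paper's own proof'' to compare against, and your write-up is effectively a self-contained argument for the cited result.

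Your most important observation is that the lemma as \emph{stated} is false: your example
\[
\Bigl(\prod_{i\text{ odd}}(1-x_i)^{-1}\Bigr)\Bigl(\prod_{i\text{ even}}(1-x_i)^{-1}\Bigr)=\prod_{i\ge1}(1-x_i)^{-1}
\]
is a genuine counterexample in $\mathbb{Q}[[x_1,x_2,\ldots]]$. You are right that a bounded-degree hypothesis repairs it and that this hypothesis is available in every application the paper makes (all functions involved are homogeneous of degree $|\alpha/\beta|$). This is worth flagging.

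Under the bounded-degree hypothesis your argument is correct. The reduction to finitely many variables via $\phi_N$ is routine and you handle it properly (including the point that $g$ is symmetric iff $\phi_N(g)$ is for all large $N$, and that $\phi_N(g)\phi_N(h)\ne 0$ for large $N$). The finite-variable core is also sound: if a non-semi-invariant irreducible factor $\pi$ of $P$ exists, its line-stabilizer $H\subsetneq\Sn_N$ either has index $\ge N$ (forcing $\ge N$ non-associate conjugates of $\pi$ to divide $PQ$, contradicting $\deg PQ<N$) or equals $\mathfrak A_N$ (forcing $\deg\pi\ge\binom{N}{2}$ via the decomposition $\mathbb{Q}[x_1,\ldots,x_N]^{\mathfrak A_N}=\mathbb{Q}[e_1,\ldots,e_N]\oplus\Delta\,\mathbb{Q}[e_1,\ldots,e_N]$). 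The two group-theoretic inputs you invoke---that $\Sn_N$ has no subgroup of index strictly between $2$ and $N$ for $N\ge 5$, and that $\mathfrak A_N$ is perfect---are standard but nontrivial; you should cite them if this were to appear in print.

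Since the paper defers to \cite{gps}, I cannot say whether your route matches theirs. Your approach is certainly heavier machinery than one might expect for a lemma the paper treats as a black box, but it is complete and correct for the regime that matters.
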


Conversely, any product of (skew) Schur functions can be considered as a single skew Schur function of a disconnected skew diagram.

Furthermore, if $\alpha/\beta$ contains a row $r_i$ with no cells, then the generating function it produces is the same as that with the empty row  $r_i$ removed.  \refy{Thus,} we only need \ME{to} consider $\alpha/\beta$ with
$$\beta _i < \alpha _i \mbox{ for all } 1\leq i\leq \ell(\beta).$$In particular, observe that if $\alpha/\beta$ is an extended  skew diagram with $\beta \subseteq \alpha$ then $\beta$ must be a {partition}. 

\

\emph{Thus, for the remainder of the paper, we assume that all diagrams are connected and every row contains at least one cell.}

\section{Symmetric (row-strict) dual immaculate functions}\label{sec:symdI}
For our first classification we focus on dual immaculate functions, before applying $\psi$ to immediately obtain the classification for row-strict dual immaculate functions.

\subsection{Dual immaculate functions}\label{subsec:di} These functions, introduced \refy{by Berg et al.} \cite{BBSSZ2014}, can be defined as generating functions of \emph{immaculate tableaux}, which we now define.

\begin{definition}\label{def:dItab} Let $\alpha, \beta$ be compositions such that $\beta \subseteq \alpha$. Then $$\TdI$$is the set of all tableaux of  {shape} $\alpha/\beta$ such that
\begin{enumerate}
\item the leftmost column entries that belong to $\alpha$ but not $\beta$ strictly increase from  bottom to top; 
\item the row entries weakly increase  from left to right.
\end{enumerate}
\end{definition}

\begin{example}\label{ex:dItab} The following tableau belongs to $\mathcal{T}_{(3,4,4,3,1)/(2,1,2)}({\text{1st col}<, \text{rows}\leq})$. \ME{The entries in the leftmost column are italicised.}
$$\begin{ytableau}
\ME{\emph{5}}\\
\ME{\emph{2}}&2&3\\
\none&\none&3&4\\
\none&1&2&4\\
\none&\none&3
\end{ytableau}$$
\end{example}

Then for compositions $\alpha, \beta$ such that $\beta \subseteq \alpha$ the \emph{dual immaculate function} $\dI _{\alpha/\beta}$ is given by
\begin{equation}\label{eq:dI}
\dI _{\alpha/\beta} = \sum _{T\in \TdI} \xT = \sum _{\gamma \vDash |\alpha/\beta|} c _{(\alpha/\beta)\gamma} M_\gamma
\end{equation}where $c _{(\alpha/\beta)\gamma}$ is the number of $T\in \TdI$ with content $\gamma$. The second equality follows by definition. 

\refy{\begin{example}\label{ex:dI}
We have that $\dI _{(1,2)}= M_{(1,2)}+M_{(1,1,1)}$ from the following tableaux.
$$\begin{ytableau} 2&2\\1\end{ytableau}\qquad \begin{ytableau} 2&3\\1\end{ytableau}$$
Similarly,
$$\dI _{(2,2)/(1)}= M_{(3)}+ 2M_{(2,1)}+2M_{(1,2)}+3M_{(1,1,1)}= m_{(3)}+2m_{(2,1)}+3m_{(1,1,1)}$$from the following tableaux.
$$\begin{ytableau} 1&1\\\none&1\end{ytableau}\qquad 
\begin{ytableau} 1&1\\\none&2\end{ytableau}\qquad 
\begin{ytableau} 1&2\\\none&1\end{ytableau}\qquad 
\begin{ytableau} 1&2\\\none&2\end{ytableau}\qquad 
\begin{ytableau} 2&2\\\none&1\end{ytableau}\qquad 
\begin{ytableau} 1&2\\\none&3\end{ytableau}\qquad 
\begin{ytableau} 1&3\\\none&2\end{ytableau}\qquad 
\begin{ytableau} 2&3\\\none&1\end{ytableau}$$
\end{example}}

We can now classify when dual immaculate functions are symmetric. The case $\beta= \emptyset$ was proved \refy{by Allen et al.} \finv{\cite[Proposition 4.7]{allenetal}} using Young quasisymmetric Schur functions. For completeness, we provide a direct proof of this case, which we then apply to obtain the general case.

\begin{theorem}\label{the:symdI} Let $\alpha, \beta$ be compositions such that $\beta\subseteq \alpha$. Then
$$\dI _{\alpha/\beta} \mbox{ is symmetric} \Leftrightarrow \alpha _i = 1 \mbox{ for } \ell(\beta)+2 \leq i \leq \ell(\alpha).$$
\end{theorem}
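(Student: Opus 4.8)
\emph{The plan is to first reduce to the straight-shape case $\beta=\emptyset$, and then treat the two implications separately.} Write $k=\ell(\beta)$ and $\ell=\ell(\alpha)$. Since each of rows $1,\dots,k$ of $\alpha/\beta$ has its leftmost cell in column $\beta_i+1\ge 2$, none of those cells lies in column~$1$, so by Definition~\ref{def:dItab} these rows carry no condition beyond weak increase along each row; hence they are filled independently of each other and of the rest of the tableau, and the generating function factors as $\dI_{\alpha/\beta}=\left(\prod_{i=1}^{k}h_{\alpha_i-\beta_i}\right)\dI_{\alpha'}$, where $\alpha'=(\alpha_{k+1},\dots,\alpha_\ell)$. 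Each $h_{\alpha_i-\beta_i}$ is symmetric, so Lemma~\ref{lem:symprod} gives that $\dI_{\alpha/\beta}$ is symmetric if and only if $\dI_{\alpha'}$ is, and the displayed condition on $\alpha$ says precisely that $\alpha'=(\alpha_{k+1},1^{\ell-k-1})$ is a hook composition. Thus it suffices to prove: for a composition $\alpha$, $\dI_\alpha$ is symmetric if and only if $\alpha=(\alpha_1,1^{\ell(\alpha)-1})$. (This reduction also sets up Corollary~\ref{cor:symdI}, since in the symmetric case $\dI_{\alpha/\beta}$ is then a product of complete homogeneous symmetric functions with a hook Schur function.)

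For the ``if'' direction, suppose $\alpha=(\alpha_1,1^{m})$. Every row but the first is a single cell, and the only column with more than one cell is column~$1$; so the conditions defining $\TdI$ with $\beta=\emptyset$ (first column strictly increasing, rows weakly increasing) coincide with the semistandard conditions of Definition~\ref{def:ssyt} for the partition $\alpha$. Hence $\dI_\alpha=s_\alpha$, which is symmetric.

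For the ``only if'' direction I argue by contrapositive. Assume $\alpha_j\ge 2$ for some $2\le j\le r:=\ell(\alpha)$, and set $n=|\alpha|$. By the observation in Section~\ref{subsec:symqsym} it is enough to produce two compositions with the same underlying partition but different coefficients in $\dI_\alpha$. Take $\gamma=(1^{r-1},n-r+1)$ and $\gamma'=(n-r+1,1^{r-1})$; these have common partition $(n-r+1,1^{r-1})$, and $\gamma\ne\gamma'$ because $n>r$ when $\alpha$ is not a hook. The structural point is that in any immaculate tableau of shape $\alpha$ whose content uses exactly $r$ distinct values, the strictly increasing first column must consist of all $r$ of those values, so the bottom cell of every row is forced; in particular the smallest value can occur only in the first row, hence with multiplicity at most $\alpha_1$, and $\alpha_1<n-r+1$ since $\alpha_2+\cdots+\alpha_r\ge r$ for a non-hook. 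Therefore $[M_{\gamma'}]\dI_\alpha=0$, whereas there is exactly one immaculate tableau of shape $\alpha$ with content $\gamma$ (first column the forced run $1,2,\dots,r$, and every other cell equal to $r$, all other values being exhausted), so $[M_\gamma]\dI_\alpha=1$; hence $\dI_\alpha\notin\Sym$.

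I expect the main obstacle to be the reduction step: one must check carefully that rows lying strictly inside $\beta$ really do impose no constraint on the remaining cells, so that the generating function genuinely factors as a product of $h$'s times $\dI_{\alpha'}$. After that, the ``if'' direction is immediate, and in the ``only if'' direction the one creative move is choosing the pair $\gamma,\gamma'$; once the ``staircase-forcing'' observation about the first column is available, the two coefficient computations are routine.
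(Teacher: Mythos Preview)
Your proposal is correct and follows essentially the same approach as the paper: factor $\dI_{\alpha/\beta}$ as $\dI_{\alpha'}\prod_{i=1}^{\ell(\beta)}s_{(\alpha_i-\beta_i)}$ via the observation that the bottom $\ell(\beta)$ rows avoid column~1 and hence fill independently, reduce via Lemma~\ref{lem:symprod}, identify hook shapes with Schur functions for the ``if'' direction, and for the ``only if'' direction compare the coefficients of $M_{(1^{\ell-1},n-\ell+1)}$ and $M_{(n-\ell+1,1^{\ell-1})}$ using that the first column of any immaculate tableau with such content is forced. Your contrapositive phrasing with the explicit coefficient values $1$ and $0$ is slightly sharper than the paper's, but the argument is the same; note that the paper writes $s_{(\alpha_i-\beta_i)}$ where you write $h_{\alpha_i-\beta_i}$.
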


\begin{proof} {In this \refy{proof,} we will compute the expansion of \svw{$\dI _{\alpha/\beta}$} in terms of monomial quasisymmetric functions using Equation~\eqref{eq:dI}, and so need to calculate all \ME{$T\in \TdI$} that have content $\gamma$ where $\gamma \vDash |\alpha/\beta|$.} We begin with the case $\beta = \emptyset$. Note that in this case, \ME{for the statement to hold} if $\alpha \vDash n$ then $\alpha =(k,1^{n-k})$ for some $1\leq k\leq n$ is the composition under consideration, \svw{which corresponds to a column if $k=1$ and a row if $k=n$.}

For one direction, if $\alpha=(k,1^{n-k})$, then $\alpha$ is a partition and  
$$\mathcal{T}_{\alpha}({\text{1st col}<, \text{rows}\leq})=\mathcal{T}_{\alpha}({\text{cols}<, \text{rows}\leq}).$$Hence, $\dI_\alpha = s_\alpha$ by definition. 

For the other direction, suppose that $\dI _\alpha$ is symmetric. \JL{Let $\ell=\ell(\alpha)$.}
Consider $\gamma=(1^{\ell-1},n-\ell+1)$. Notice that we can always construct $T\in  \mathcal{T}_{\alpha}({\text{1st col}<, \text{rows}\leq})$ with content $\gamma$ by filling the cells in the leftmost column with $1,\ldots,\ell$, increasing from bottom to top, and the rest of \refz{the cells in $\alpha$ with $\ell$.} This is a valid \refy{filling, and} hence $c_{\alpha\gamma}\neq 0$. Since $\dI _\alpha$ is symmetric, $c_{\alpha\gamma '}\neq 0$ for every $\gamma '$ satisfying $\lambda(\gamma) = \lambda(\gamma ')$. In particular, when $\gamma '=(n-\ell+1,1^{\ell-1})$, $c_{\alpha \gamma '}\neq 0$. Observe that any tableau in $\mathcal{T}_{\alpha}({\text{1st col}<, \text{rows}\leq})$ with content $\gamma '$ must have the cells in the leftmost column filled with $1,\ldots,\ell$ increasing from bottom to top, which forces the remaining cells to be filled with $1$. As all row entries must be weakly increasing from left to right by definition, it follows that only the bottom row can have more than one cell, or equivalently, $\alpha=(k,1^{n-k})$ for some \svw{$1\leq k\leq n$.}
   
    We now proceed with the case \svw{$\beta \neq \emptyset$.} First observe that if we set $D=\alpha/\beta$, then it contains two subdiagrams:  $D_1$ from row $\ell(\beta)+1$ \svw{to} row $\ell(\alpha)$, and the remainder $D_2$, consisting of the bottom $\ell(\beta)$ rows. \JL{So, in particular, $D_1=(\alpha_{\ell(\beta)+1},\ldots,\alpha_{\ell(\alpha)})$.}

    Any tableau $T\in  \TdI$ of shape $D$ restricted to $D_1$ is a tableau \JL{in $\mathcal{T}_{D_1}({\text{1st col}<, \text{rows}\leq})$} and vice versa. \refy{Hence,} the fillings of this subdiagram $D_1$
 contribute $\dI_{D_1}$ to $\dI_{\alpha/\beta}$. This is because the filling of the cells of $D_1$ is independent from that of $D_2$ because $D_2$ has no cells in the leftmost column. Moreover, because $D_2$ has no cells in the leftmost column of $D$, by the definition of $\TdI$, the cells in every row of $D_2$ need to be filled with entries weakly increasing from left to right, and there is no restriction for the entries in the columns. Therefore, the generating function for $D_2$ is 
    \[
    \prod_{i=1}^{\ell(\beta)} s_{(\alpha_i-\beta_i)}.
    \]
    \refy{Hence,} 
    \[\dI_{\alpha/\beta}=\dI_{D_1} \prod_{i=1}^{\ell(\beta)} s_{(\alpha_i-\beta_i)}. 
    \]
    It follows from Lemma~\ref{lem:symprod} that $\dI_{\alpha/\beta}$ is symmetric if and only if $\dI_{D_1}$ is symmetric, and by the first part of this proof, this is the case if and only if $(\alpha_{\ell(\beta)+1},\ldots,\alpha_{\JL{\ell(\alpha)}})=(k,1,\ldots,1)$ \svw{for some $k\geq 1$,} or equivalently, $\alpha _i = 1 \mbox{ for } \ell(\beta)+2 \leq i \leq \ell(\alpha)$.
\end{proof}

{\begin{example}\label{ex:dIproof}
The following demonstrates the construction in the first part of the proof,  \finv{where $\alpha = (3,4,4,3,1).$}
$$\begin{ytableau}
5\\
4&5&5\\
3&5&5&5\\
2&5&5&5\\
1&5&5
\end{ytableau}$$
\end{example}

Thus, by the definition of skew Schur functions, we get the following. 
 \begin{corollary}\label{cor:symdI}
 Let $\alpha, \beta$ be compositions such that $\beta\subseteq \alpha$ and $\dI _{\alpha/\beta}$ is symmetric. Then
 $$\dI _{\alpha/\beta} = s_{(\alpha _{\ell(\beta)+1}, 1^{\ell(\alpha)-\ell(\beta)-1})}\left( \prod _{i=1} ^{\ell(\beta)} s_{(\alpha _i - \beta _i)}\right).$$
 \end{corollary}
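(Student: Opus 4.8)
The plan is to read off the corollary directly from the factorization established in the proof of Theorem~\ref{the:symdI}, combined with the hypothesis that $\dI_{\alpha/\beta}$ is symmetric. Recall that in that proof we showed unconditionally that
$$\dI_{\alpha/\beta} = \dI_{D_1}\prod_{i=1}^{\ell(\beta)} s_{(\alpha_i-\beta_i)},$$
where $D_1 = (\alpha_{\ell(\beta)+1},\ldots,\alpha_{\ell(\alpha)})$ is the subdiagram lying strictly above $\beta$. Since each $s_{(\alpha_i-\beta_i)}$ is a one-row Schur function, the entire content of the corollary is to identify $\dI_{D_1}$ as a single (skew, in fact straight) Schur function under the symmetry hypothesis.

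First I would invoke Lemma~\ref{lem:symprod}: since $\dI_{\alpha/\beta}$ is symmetric and equals a product of $\dI_{D_1}$ with symmetric functions $s_{(\alpha_i-\beta_i)}$, the factor $\dI_{D_1}$ is itself symmetric. Next, by the ``other direction'' argument already carried out in the proof of Theorem~\ref{the:symdI} (applied to the composition $D_1$, which plays the role of $\alpha$ in the $\beta=\emptyset$ case), symmetry of $\dI_{D_1}$ forces $D_1 = (k, 1^{m})$ for $k = \alpha_{\ell(\beta)+1}\geq 1$ and $m = \ell(\alpha)-\ell(\beta)-1$; equivalently $\alpha_i=1$ for $\ell(\beta)+2\leq i\leq\ell(\alpha)$. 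For such a composition, $(k,1^m)$ is a partition, and as noted in the first part of the proof of Theorem~\ref{the:symdI} the defining conditions on immaculate tableaux of this shape coincide with those on semistandard Young tableaux, so $\dI_{(k,1^m)} = s_{(k,1^m)}$. Substituting $k = \alpha_{\ell(\beta)+1}$ and $m = \ell(\alpha)-\ell(\beta)-1$ gives $\dI_{D_1} = s_{(\alpha_{\ell(\beta)+1},\,1^{\ell(\alpha)-\ell(\beta)-1})}$.

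Finally I would assemble the pieces: plugging this identification of $\dI_{D_1}$ into the factorization displays
$$\dI_{\alpha/\beta} = s_{(\alpha_{\ell(\beta)+1},\,1^{\ell(\alpha)-\ell(\beta)-1})}\left(\prod_{i=1}^{\ell(\beta)} s_{(\alpha_i-\beta_i)}\right),$$
which is exactly the claimed formula. There is essentially no obstacle here beyond bookkeeping: every ingredient — the unconditional factorization, the equality $\dI_{(k,1^m)}=s_{(k,1^m)}$, and the classification of when $\dI_{D_1}$ is symmetric — has already been established within the proof of Theorem~\ref{the:symdI}, so the corollary is a matter of recombining them and observing that the one-row Schur functions $s_{(\alpha_i-\beta_i)}$ need no further simplification. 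The only point requiring the slightest care is making sure the indices match up: the subdiagram $D_1$ has exactly $\ell(\alpha)-\ell(\beta)$ rows, its first part is $\alpha_{\ell(\beta)+1}$ (with $\beta$ contributing nothing above row $\ell(\beta)$), and the remaining $\ell(\alpha)-\ell(\beta)-1$ parts are each $1$, which is precisely what the subscript of the Schur function records.
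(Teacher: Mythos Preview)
Your proposal is correct and follows exactly the same approach as the paper: the paper simply remarks that the corollary follows ``by the definition of skew Schur functions'' from the factorization and the identification $\dI_{(k,1^m)}=s_{(k,1^m)}$ already established in the proof of Theorem~\ref{the:symdI}. Your write-up spells out the bookkeeping that the paper leaves implicit, but the argument is the same.
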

 
\begin{example}\label{ex:symdI}
$\dI _{(3,4,4,3,1)/(2,1,2)} = s_{(3,1)}s_{(1)}s_{(3)}s_{(2)}$ is symmetric.
\end{example}

\subsection{Row-strict dual immaculate functions}\label{subsec:rdI} These \ME{functions} were discovered \refy{by Niese at al.} \cite{NSvWVW2023} and can also be realised as generating functions of tableaux.

\begin{definition}\label{def:rdItab} Let $\alpha, \beta$ be compositions such that $\beta \subseteq \alpha$. Then $$\TrdI$$is the set of all tableaux of {shape} $\alpha/\beta$ such that
\begin{enumerate}
\item the leftmost column entries that belong to $\alpha$ but not $\beta$ weakly increase from  bottom to top; 
\item the row entries strictly increase  from left to right.
\end{enumerate}
\end{definition}

\begin{example}\label{ex:rdItab} The following tableau belongs to $\mathcal{T}_{(3,4,4,3,1)/(2,1,2)}({\text{1st col}\leq, \text{rows}<})$. \ME{The entries in the leftmost column are italicised.}
$$\begin{ytableau}
\ME{\emph{2}}\\
\ME{\emph{2}}&3&5\\
\none&\none&3&4\\
\none&1&2&4\\
\none&\none&3
\end{ytableau}$$
\end{example}

Then for compositions $\alpha, \beta$ such that $\beta \subseteq \alpha$ the \emph{row-strict dual immaculate function} $\rdI _{\alpha/\beta}$ is given by
\begin{equation}\label{eq:rdI}
\rdI _{\alpha/\beta} = \sum _{T\in \TrdI} \xT = \sum _{\gamma \vDash |\alpha/\beta|} rc _{(\alpha/\beta)\gamma} M_\gamma
\end{equation}where $rc _{(\alpha/\beta)\gamma}$ is the number of $T\in \TrdI$ with content $\gamma$. The second equality follows by definition.

\refy{\begin{example}\label{ex:rdI}
We have that $\rdI _{(1,2)}= M_{(2,1)}+M_{(1,1,1)}$ from the following tableaux.
$$\begin{ytableau} 1&2\\1\end{ytableau}\qquad \begin{ytableau} 2&3\\1\end{ytableau}$$
Similarly,
$$\rdI _{(2,2)/(1)}= M_{(2,1)}+M_{(1,2)}+3M_{(1,1,1)}= m_{(2,1)}+3m_{(1,1,1)}$$from the following tableaux. 
$$
\begin{ytableau} 1&2\\\none&1\end{ytableau}\qquad 
\begin{ytableau} 1&2\\\none&2\end{ytableau}\qquad 
\begin{ytableau} 1&2\\\none&3\end{ytableau}\qquad 
\begin{ytableau} 1&3\\\none&2\end{ytableau}\qquad 
\begin{ytableau} 2&3\\\none&1\end{ytableau}
$$
\end{example}}

As proved \refy{by Niese at al.}  \cite[Theorem 3.8]{NSvWVW2023}, for any composition $\alpha$ we have that
$$\psi(\dI _\alpha) = \rdI _\alpha$$and the proof \svw{extends} to diagrams $\alpha/\beta$ for $\beta$ a composition, to give
$$\psi(\dI _{\alpha/\beta}) = \rdI _{\alpha/\beta}.$$\refy{Comparing  Examples~\ref{ex:dI} and \ref{ex:rdI}, note that $\psi$ does not necessarily preserve the number of terms in the monomial quasisymmetric function expansion. Using $\psi$ we} immediately obtain the following two results for \svw{row-strict} dual immaculate functions from Theorem~\ref{the:symdI} and Corollary~\ref{cor:symdI}, recalling \svw{Equation~\eqref{eq:transpose}} for the latter.

\begin{theorem}\label{the:symrdI} Let $\alpha, \beta$ be compositions such that $\beta\subseteq \alpha$. Then
$$\rdI _{\alpha/\beta} \mbox{ is symmetric} \Leftrightarrow \alpha _i = 1 \mbox{ for } \ell(\beta)+2 \leq i \leq \ell(\alpha).$$
\end{theorem}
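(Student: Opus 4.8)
The plan is to deduce Theorem~\ref{the:symrdI} immediately from Theorem~\ref{the:symdI} by transporting the symmetry classification across the involution $\psi$. The key input is the identity $\psi(\dI_{\alpha/\beta}) = \rdI_{\alpha/\beta}$, stated in the excerpt as the extension of \cite[Theorem 3.8]{NSvWVW2023} to skew shapes. Since $\psi$ is an involution on $\QSym$, it is in particular a linear bijection, so applying it once more gives $\dI_{\alpha/\beta} = \psi(\rdI_{\alpha/\beta})$ as well; thus $\dI_{\alpha/\beta}$ and $\rdI_{\alpha/\beta}$ are exchanged by $\psi$.

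The crux of the argument is that $\psi$ restricts to an automorphism of $\Sym$, and in particular maps $\Sym$ into $\Sym$ bijectively (indeed, by Equation~\eqref{eq:transpose} it sends $s_{\lambda/\mu}$ to $s_{(\lambda/\mu)^t}$, so $\psi(\Sym) = \Sym$). Hence $f \in \Sym$ if and only if $\psi(f) \in \Sym$. First I would invoke this to conclude: $\rdI_{\alpha/\beta}$ is symmetric $\iff$ $\psi(\rdI_{\alpha/\beta}) = \dI_{\alpha/\beta}$ is symmetric. Then I would apply Theorem~\ref{the:symdI} to the right-hand condition, which says precisely that $\dI_{\alpha/\beta}$ is symmetric if and only if $\alpha_i = 1$ for $\ell(\beta)+2 \le i \le \ell(\alpha)$. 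Chaining the two equivalences yields the stated classification for $\rdI_{\alpha/\beta}$, with the identical combinatorial condition on $\alpha$ and $\beta$.

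There is essentially no obstacle here: the only thing that needs checking is that $\psi$ genuinely preserves the subspace $\Sym \subset \QSym$, which is classical and already recorded in the excerpt via Equation~\eqref{eq:transpose}. (One could also argue directly from the formula $\psi(M_\alpha) = (-1)^{|\alpha|-\ell(\alpha)} \sum_{\beta \coarsens \alpha} M_\beta$ that $\psi$ commutes with the characterization ``$[M_\alpha]f = [M_{\alpha'}]f$ whenever $\lambda(\alpha) = \lambda(\alpha')$,'' but appealing to Equation~\eqref{eq:transpose} is cleaner.) The entire proof is therefore one or two sentences. I would write it as: ``By the preceding discussion $\psi(\dI_{\alpha/\beta}) = \rdI_{\alpha/\beta}$, and $\psi$ is an involution on $\QSym$ restricting to an automorphism of $\Sym$ by Equation~\eqref{eq:transpose}; hence $\rdI_{\alpha/\beta}$ is symmetric if and only if $\dI_{\alpha/\beta}$ is symmetric, and the result follows from Theorem~\ref{the:symdI}.''

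The only mild subtlety worth a remark — not an obstacle — is that Corollary~\ref{cor:symrdI} (the explicit skew Schur description) will require care because $\psi$ transposes skew Schur functions, so the product $\prod_i s_{(\alpha_i - \beta_i)}$ of one-row shapes becomes a product of one-column shapes and $s_{(\alpha_{\ell(\beta)+1}, 1^{\ell(\alpha)-\ell(\beta)-1})}$ becomes $s_{(\ell(\alpha)-\ell(\beta), 1^{\alpha_{\ell(\beta)+1}-1})}$; but the theorem statement itself, being only about the symmetry condition, is untouched by this and follows purely formally.
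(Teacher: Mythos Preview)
Your proposal is correct and matches the paper's approach exactly: the paper also derives Theorem~\ref{the:symrdI} immediately from Theorem~\ref{the:symdI} via the identity $\psi(\dI_{\alpha/\beta}) = \rdI_{\alpha/\beta}$ together with the fact (from Equation~\eqref{eq:transpose}) that $\psi$ preserves $\Sym$. Your remark about how Corollary~\ref{cor:symrdI} arises by transposing the Schur factors is likewise precisely what the paper does.
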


 \begin{corollary}\label{cor:symrdI}
 Let $\alpha, \beta$ be compositions such that $\beta\subseteq \alpha$ and $\rdI _{\alpha/\beta}$ is symmetric. Then
 $$\rdI _{\alpha/\beta} = s_{\refy{(\ell(\alpha)-\ell(\beta), 1^{\finv{-1+\alpha _{\ell(\beta)+1}}})}}\left( \prod _{i=1} ^{\ell(\beta)} s_{(1^{\alpha _i - \beta _i})}\right).$$
 \end{corollary}
 
 \begin{example}\label{ex:symrdI}
$\rdI _{(3,4,4,3,1)/(2,1,2)} = s_{(2, 1^2)}s_{(1)}s_{(1^3)}s_{(1^2)}$ is symmetric.
\end{example}
\section{Symmetric (row-strict) extended Schur functions}\label{sec:symex} The extended Schur functions are natural generalizations of Schur functions and, as we will see, extend the definition of Schur functions in Section~\ref{sec:background} by elegantly generalizing the definition of semistandard Young tableaux from partition shape to composition shape. These functions arose naturally in two areas: as dual functions to shin functions \cite{Campbell2014} and as stable limits of lock polynomials \cite{AS2019}.

\subsection{Extended Schur functions}\label{subsec:ex}
These functions are generating functions of \emph{extended tableaux} that coincide with semistandard Young tableaux when we restrict our diagrams to partitions. Note that in the next definition the partition $\lambda$ can be replaced by a composition $\beta$ \cite[Theorem 17]{LOPSvWW25}, however, we will not need this here. 

\begin{definition}\label{def:extab} Let $\alpha$ be a composition, and $\lambda$ be a partition such that $\lambda \subseteq \alpha$. Then $$\Tex$$is the set of all tableaux of {shape} $\alpha/\lambda$ such that
\begin{enumerate}
\item the column entries strictly increase from  bottom to top; 
\item the row entries weakly increase  from left to right.
\end{enumerate}
\end{definition}

\refy{These tableaux are also known as shin-tableaux when $\lambda = \emptyset$ \cite{Campbell2014}.}

\begin{example}\label{ex:extab} The following tableau belongs to $\mathcal{T}_{(3,4,4,3,1)/(2,2,1)}({\text{cols}<, \text{rows}\leq})$.
$$\begin{ytableau}
3\\
2&3&5\\
\none&2&4&4\\
\none&\none&2&3\\
\none&\none&1
\end{ytableau}$$
\end{example}

\refy{The following definition uses the notation of Assaf and Searles \cite{AS2019}, however, if we instead used the notation of dual shin functions, then $\ex _{\alpha}$ would be denoted by $\shin^*_{\alpha}$. For} a composition $\alpha$ and partition $\lambda$ such that $\lambda \subseteq \alpha$ the \emph{extended Schur function} $\ex _{\alpha/\lambda}$ is given by
\begin{equation}\label{eq:ex}
\ex _{\alpha/\lambda} = \sum _{T\in \Tex} \xT = \sum _{\gamma \vDash |\alpha/\lambda|} d _{(\alpha/\lambda)\gamma} M_\gamma
\end{equation}where $d _{{(\alpha/\lambda)}\gamma}$ is the number of $T\in \Tex$ with content $\gamma$. The second equality follows by definition. 

\refy{\begin{example}\label{ex:ex}
We have that $\ex _{(1,2)}= M_{(1,2)}+M_{(1,1,1)}$ from the following tableaux.
$$\begin{ytableau} 2&2\\1\end{ytableau}\qquad \begin{ytableau} 2&3\\1\end{ytableau}$$
Similarly,
$$\ex _{(2,2)/(1)}= M_{(2,1)}+M_{(1,2)}+2M_{(1,1,1)}= m_{(2,1)}+2m_{(1,1,1)}$$from the following tableaux.
$$
\begin{ytableau} 1&2\\\none&1\end{ytableau}\qquad 
\begin{ytableau} 2&2\\\none&1\end{ytableau}\qquad 
\begin{ytableau} 1&3\\\none&2\end{ytableau}\qquad 
\begin{ytableau} 2&3\\\none&1\end{ytableau}$$
\end{example}}

The necessity for \ME{$\lambda$ being a partition} is discussed in the final section, for the interested reader.
For now, we give our classification for when an extended Schur function is symmetric.

\begin{theorem}\label{the:symex}
Let $\alpha$ be a composition and $\lambda$ be a partition, such that $\lambda \subseteq \alpha$. Then
$$\ex _{\alpha/\lambda} \mbox{ is symmetric} \Leftrightarrow \alpha  \mbox{ is a partition. }$$
\end{theorem}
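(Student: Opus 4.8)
The plan is to prove both directions directly, using the $M_\gamma$-expansion of $\ex_{\alpha/\lambda}$ from Equation~\eqref{eq:ex} and the observation that a quasisymmetric function is symmetric if and only if $[M_\gamma]f = [M_{\gamma'}]f$ whenever $\lambda(\gamma) = \lambda(\gamma')$.

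For the easy direction ($\Leftarrow$), suppose $\alpha$ is a partition. Then $\alpha/\lambda$ is an honest skew partition shape, and Definition~\ref{def:extab} coincides verbatim with Definition~\ref{def:ssyt}: the column-strict, row-weak conditions are exactly those defining semistandard Young tableaux. Hence $\ex_{\alpha/\lambda} = s_{\alpha/\lambda}$ is the skew Schur function of Equation~\eqref{eq:schur}, which is symmetric. (This also gives the corresponding corollary.)

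For the hard direction ($\Rightarrow$), I would argue contrapositively: assume $\alpha$ is not a partition and exhibit two compositions $\gamma, \gamma'$ with $\lambda(\gamma) = \lambda(\gamma')$ but $d_{(\alpha/\lambda)\gamma} \neq d_{(\alpha/\lambda)\gamma'}$. Since $\alpha$ is not a partition, there is some row index $i$ with $\alpha_i < \alpha_{i+1}$; because every row of $\alpha/\lambda$ has a cell and $\lambda$ is a partition, one can locate an ``ascent'' in $\alpha$ and use it to find a content that is realizable one way but not the reverse way. The natural strategy is to build a ``greedy'' standard-like filling of $\alpha/\lambda$: fill using as few distinct values as possible from the bottom up, obtaining some content $\gamma$, then show that the reversed (or sorted) content $\gamma'$ cannot be achieved because the column-strictness forces certain cells in the longer upper row to receive values that are too large, violating row-weak-increase or column-strictness below. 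Concretely, I expect the cleanest witness to come from considering the content $\gamma$ that places $1$'s as high as the column-strict condition allows in the shorter rows while packing the extra cells of the longer row $i+1$ with repeats; sorting $\gamma$ into weakly decreasing order then over-commits small values to a row that is too short to hold them. I would also use the already-proved extended-tableaux flexibility (rows only weakly increase, columns strictly increase bottom-to-top) to make the counting transparent, possibly reducing first to the non-skew case $\lambda = \emptyset$ and then noting the skew rows below contribute independently a product of row Schur functions $\prod_i s_{(\alpha_i - \lambda_i)}$ as in the proof of Theorem~\ref{the:symdI} — though here, unlike the dual immaculate case, the column-strictness ties the skew part to the rest, so I would need to check whether such a clean factorization actually holds or whether the whole diagram must be treated together.

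The main obstacle is pinning down the right pair $(\gamma, \gamma')$ and verifying the nonexistence claim rigorously: showing $d_{(\alpha/\lambda)\gamma'} = 0$ requires a careful argument that every attempted filling with content $\gamma'$ runs into a conflict between the strict-column and weak-row conditions, and this must be done uniformly for all non-partition $\alpha$ (not just a single example). I would handle this by isolating the first ascent $\alpha_i < \alpha_{i+1}$, observing that in any extended tableau the cells of row $i+1$ lying strictly to the right of column $\alpha_i$ have nothing directly below them in $\alpha/\lambda$ only if $\lambda$ is short there, and tracking how the column-strict condition propagates a lower bound on entries up each column — then choosing $\gamma$ to saturate those bounds and $\gamma'$ its decreasing rearrangement, which demands more low values in a region of the diagram that structurally cannot supply them.
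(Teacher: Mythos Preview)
Your easy direction is correct and identical to the paper's.

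For the hard direction, your high-level plan---find $\gamma,\gamma'$ with $\lambda(\gamma)=\lambda(\gamma')$ but $d_{(\alpha/\lambda)\gamma}\neq d_{(\alpha/\lambda)\gamma'}$---is the paper's as well, but neither of the two concrete tactics you suggest goes through. The factorization idea from Theorem~\ref{the:symdI} fails here, as you already suspect: column-strictness couples all rows of $\alpha/\lambda$ in every column, so there is no splitting into $\ex_{D_1}$ times a product of row Schurs. More seriously, your main line---produce a ``greedy'' content $\gamma$ and show its decreasing rearrangement $\gamma'$ has $d_{(\alpha/\lambda)\gamma'}=0$---is never made concrete. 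You correctly identify this as the obstacle, but the phrases ``saturate those bounds'' and ``structurally cannot supply them'' are not yet an argument, and proving $d_{\gamma'}=0$ uniformly for all non-partition $\alpha$ and all partitions $\lambda\subseteq\alpha$ is genuinely delicate.

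The paper sidesteps the vanishing issue entirely by proving a strict \emph{inequality} between two nonzero coefficients. It takes the \emph{maximal} index $I$ with $\alpha_{I+1}>\alpha_I$, sets $K=\sum_{i\le I}(\alpha_i-\lambda_i)+(\alpha_I-\lambda_{I+1})-1$, and compares the nearly-standard contents $\gamma=(1^{K-1},2,1,\ldots,1)$ and $\gamma'=(1^{K},2,1,\ldots,1)$. It then builds a Bender--Knuth-type injection $\phi:S^{\gamma}_{\alpha/\lambda}\to S^{\gamma'}_{\alpha/\lambda}$ that in each row swaps the numbers of free $K$'s and free $(K{+}1)$'s (those not sitting in a $K$-over-$(K{+}1)$ column pair); checking well-definedness uses that $\lambda$ is a partition. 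Finally it exhibits an explicit $T'\in S^{\gamma'}_{\alpha/\lambda}$ with a fixed $K,(K{+}1)$ pair in column $\alpha_I$ and an additional free $K{+}1$ in column $\alpha_I+1$ of row $I{+}1$---possible exactly because $\alpha_{I+1}>\alpha_I$---and observes that any preimage would need a $K$ immediately right of the fixed $K{+}1$ in row $I{+}1$, violating the row condition. Thus $d_{(\alpha/\lambda)\gamma}<d_{(\alpha/\lambda)\gamma'}$. This is the missing idea in your sketch: work with contents that differ only in the position of a single $2$, and compare counts via an injection rather than trying to force one count to zero.
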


\begin{proof} 
For one direction, observe that if both $\alpha$ and $\lambda$ are partitions, then $\ex_{\alpha/\lambda}=s_{\alpha/\lambda}$ \refz{by comparing their definitions, and hence  $\ex_{\alpha/\lambda}$ is symmetric.} For the other direction, suppose to the contrary that $\alpha/\lambda$ \refz{is} such that $\lambda\subseteq \alpha$, $\lambda$ is a partition, and $\alpha$ is a composition that is not a partition. Denote by $S_{\alpha/\lambda}^{\gamma}$ the set of tableaux in $\Tex$ with content $\gamma$.
    
    In order to show that $\ex_{\alpha/\lambda}$ is not symmetric, by the definition of $\ex_{\alpha/\lambda}$ in terms of monomial quasisymmetric functions \svw{in Equation~\eqref{eq:ex},} it suffices to find some $\gamma$ and $\gamma '$, such that $d _{(\alpha/\lambda)\gamma} \neq d _{(\alpha/\lambda)\gamma'}$ but $\lambda(\gamma)=\lambda(\gamma ')$.

    Let us start by constructing $\gamma$ and $\gamma'$. Take the maximal $I$ such that $\alpha_{I+1}>\alpha_I$, \ME{which exists because $\alpha$ is not a partition.} Let 
    \[
K=\sum_{i=1}^I(\alpha_i-\lambda_i)+(\alpha_I-\lambda_{I+1})-1.
    \]
    Then set 
    \[
   \gamma=(1^{K-1},2,1^{\refz{|\alpha/\lambda|}-K-1}) \quad \text{ and }\quad  \gamma'=(1^{K},2,1^{\refz{|\alpha/\lambda|}-K-2}).
    \]
  Note that $\lambda(\gamma)=\lambda(\gamma ')=(2, 1^{\refz{|\alpha/\lambda|}-2})$. 
    
    Next we will construct an injective yet not surjective map $$\phi:S_{\alpha/\lambda}^{\gamma}\to S_{\alpha/\lambda}^{\gamma'}$$from which it follows that $d _{(\alpha/\lambda)\gamma}<d _{(\alpha/\lambda)\gamma'}$.
     Given $T \in \Tex$, each column contains either {both} $K, K+1$, exactly one of $K, K+1$, or neither. If both $K, K+1$ are contained in a column then we call them a \emph{pair}. We call the pairs \emph{fixed} and all other occurrences of $K$ and $K+1$ \emph{free}. For a given $T\in S_{\alpha/\lambda}^{\gamma}$, construct $\phi(T)$ from $T$ by doing the following swapping procedure for each row, \refz{which is similar to the Bender-Knuth involution.} If a row has $p$ free $K$'s followed by $q$ free $(K+1)$'s, then replace them by $q$ free $K$'s followed by $p$ free $(K+1)$'s. 
     
   Observe that \svw{the} resulting tableau $\phi(T)$ has content $\gamma'$. We still need to verify that $\phi(T)\in \Tex$. Note that \ME{since $T$ has content $\gamma$} there are only two copies of $K$ and one copy of $K+1$ in $T$, and swapping $K$'s and $K+1$'s will not affect the increasing row and column criteria with regard to entries not equal to $K$ or $K+1$. So it suffices to see if the restrictions still hold among the three cells whose entries have been swapped. 
     \begin{enumerate}
         \item If all of them are free, then $\phi(T)\in \Tex$ by the above note.
         \item If a pair of $K$ and $K+1$ is fixed and the other $K$ is free, depending on  the location of the free $K$, we have the following cases.
     \begin{enumerate}
         \item  If the free $K$ is in a cell sharing no common row with the pair, then $\phi(T)\in \Tex$ by the above note.
         \item If the free $K$ is in the cell to the immediate left of the $K+1$, or to the immediate right of the $K$ of the pair, then all rows will still be weakly increasing so $\phi(T)\in \Tex$ by the above note.
         \item If the free $K$ is in the cell to the immediate left of the $K$ of the pair, then assume \ME{$T(i,h)=T(i,h+1)=K$ and $T(j,h+1)=K+1$ for some row $j>i$, and column $h$. Observe that $(j,h)\in\alpha/\lambda$ because $\lambda$ is a partition. By the definition of $\Tex$, $T(j,h)=K+1$,} which makes it impossible for $T$ to have content $\gamma$, therefore this case does not occur.  
     \end{enumerate}
     \end{enumerate}
This verifies that $\phi$ is well-defined. 
     
     Observe that $\phi$ is an injection, so we only need to show that there exists some $T'\in S_{\alpha/\lambda}^{\gamma'}$ with no preimage under the map $\phi$. We construct $T'$ as follows.
     \begin{enumerate}
         \item Fill the first $I-1$ rows from bottom to top, from left to right with \ME{$1,2,\ldots,\sum_{i=1}^{I-1}(\alpha_{i}-\lambda_{i})$} in order.
         \item For row $I$, fill all but the last cell with $\sum_{i=1}^{I-1}(\alpha_{i}-\lambda_{i})+1,\ldots,\sum_{i=1}^{I-1}(\alpha_{i}-\lambda_{i})+(\alpha_I-\lambda_I)-1$, and put $K$ in the last cell. 
         \item For the next row $I+1$, fill the first $\alpha_I-\lambda_{I+1}-1$ cells with $\sum_{i=1}^{I}(\alpha_{i}-\lambda_{i}),\ldots, K-1$, followed by two cells with $(K+1)$'s. 
         \item Continue filling the rest from bottom to top, from left to right with $K+2,\ldots,\svw{|\alpha/\lambda|-1}$ \ME{in order.}
     \end{enumerate}
         \refz{By the} construction \refz{of $T'$ and definition of $K$, we know that almost all of} the rows/columns are weakly/strictly increasing going left to right/bottom to top, respectively, with the possible exception  \refz{of the cells in row $I$ and $I+1$ that lie in column $\alpha_I$, for which this property needs to be checked.} However,  $T'(I,\alpha_I)=K$, $T'(I+1,\alpha_I)=K+1$ and $T'(I+1,\alpha_I+1)=K+1$, so column \refz{$\alpha_I$} increases from bottom to top. \refz{Hence,   $T'\in S_{\alpha/\lambda}^{\gamma'}$.} Moreover,  the first two \finv{entries} are fixed and the last $K+1$ is free. Therefore, $T'$ has no preimage under $\phi$, and we are done.
\end{proof}

\begin{example}\label{ex:Tprime}
We construct $T'$ for $\alpha = (6,4,6,2)$ and $\lambda = (2,1,1)$. Then $I=2$ and $K=4+3+(4-1) -1=9$ and 
$$T'= \begin{ytableau}
12&13\\
\none&7&8&10&10&11\\
\none&5&6&9\\
\none&\none&1&2&3&4\\
\end{ytableau}\quad.$$
\end{example}

Hence, by the definition of skew Schur functions, we get the following.

\begin{corollary}\label{cor:symex}
Let $\alpha$ be a composition and $\lambda$ be a partition, such that $\lambda \subseteq \alpha$ and $\ex _{\alpha/\lambda}$ is symmetric. Then
 $$\ex _{\alpha/\lambda} = s_{\alpha/\lambda}.$$
\end{corollary}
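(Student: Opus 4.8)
The plan is to reduce the statement to a direct comparison of definitions, using the classification already established. First I would invoke Theorem~\ref{the:symex}: since $\ex_{\alpha/\lambda}$ is assumed symmetric, that theorem forces $\alpha$ to be a partition. Record this as the single substantive input. Note that once $\alpha$ is a partition, the containment $\lambda \subseteq \alpha$ (in the sense of Section~\ref{subsec:comps}) is just ordinary containment of partition diagrams, so $\alpha/\lambda$ is a genuine skew shape in the sense of Definition~\ref{def:ssyt}.

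Next I would compare Definition~\ref{def:extab} with Definition~\ref{def:ssyt} for this $\alpha/\lambda$. The set $\Tex$ consists of fillings of $\alpha/\lambda$ with columns strictly increasing from bottom to top and rows weakly increasing from left to right; these are verbatim the two conditions defining the set $\Tssy$ of semistandard Young tableaux (the only difference between the two definitions being that the first is phrased for $\alpha$ a composition, but here $\alpha$ is a partition). Hence $\Tex = \Tssy$ as sets of tableaux. Comparing the first equality in Equation~\eqref{eq:ex} with the first equality in Equation~\eqref{eq:schur}, which are sums of $\xT$ over these two coinciding sets, we conclude $\ex_{\alpha/\lambda} = s_{\alpha/\lambda}$.

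I do not anticipate any obstacle: the corollary is a one-step unwinding of definitions once Theorem~\ref{the:symex} supplies that $\alpha$ is a partition. The only point worth a clause of care is that the hypothesis already assumes $\lambda$ is a partition, so that both indices of the skew Schur function on the right-hand side are legitimately partitions, as required by Equation~\eqref{eq:schur}.
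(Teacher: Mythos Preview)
Your proposal is correct and follows essentially the same approach as the paper: invoke Theorem~\ref{the:symex} to force $\alpha$ to be a partition, then observe that Definitions~\ref{def:extab} and~\ref{def:ssyt} coincide so that $\ex_{\alpha/\lambda}=s_{\alpha/\lambda}$ by definition. The paper in fact already records this identification within the proof of Theorem~\ref{the:symex} itself, so the corollary is stated with only the one-line justification ``by the definition of skew Schur functions.''
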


\begin{example}\label{ex:symex} $\ex _{(3,4,4,3,1)/(2,2,1)}$ is not symmetric, however $\ex _{(4,4,3,3,1)/(2,2,1)} = s_{(4,4,3,3,1)/(2,2,1)}$ is symmetric.
\end{example}

\subsection{Row-strict extended Schur functions}\label{subsec:rex}
As with the row-strict analogue of dual immaculate functions, the row-strict extended Schur functions are related to extended Schur functions via the involution $\psi$.

\begin{definition}\label{def:rextab} Let $\alpha$ be a composition, and $\lambda$ be a partition such that $\lambda \subseteq \alpha$. Then $$\Trex$$is the set of all tableaux of {shape} $\alpha/\lambda$ such that
\begin{enumerate}
\item the column entries weakly increase from  bottom to top; 
\item the row entries strictly increase  from left to right.
\end{enumerate}
\end{definition}

\begin{example}\label{ex:rextab} The following tableau belongs to $\mathcal{T}_{(3,4,4,3,1)/(2,2,1)}({\text{cols}\leq, \text{rows}<})$.
$$\begin{ytableau}
3\\
1&4&5\\
\none&2&3&4\\
\none&\none&2&3\\
\none&\none&2
\end{ytableau}$$
\end{example}

Then for a composition $\alpha$ and partition $\lambda$ such that $\lambda \subseteq \alpha$ the \emph{row-strict extended Schur function} $\rex _{\alpha/\lambda}$ is given by
\begin{equation}\label{eq:rex}
\rex _{\alpha/\lambda} = \sum _{T\in \Trex} \xT = \sum _{\gamma \vDash |\alpha /\lambda|} rd _{(\alpha/\lambda)\gamma} M_\gamma
\end{equation}where $rd _{{(\alpha/\lambda)}\gamma}$ is the number of $T\in \Trex$ with content $\gamma$. The second equality follows by definition.

\refy{\begin{example}\label{ex:rex}
We have that $\rex _{(1,2)}= M_{(2,1)}+M_{(1,1,1)}$ from the following tableaux.
$$\begin{ytableau} 1&2\\1\end{ytableau}\qquad \begin{ytableau} 2&3\\1\end{ytableau}$$
Similarly,
$$\rex _{(2,2)/(1)}= M_{(2,1)}+M_{(1,2)}+2M_{(1,1,1)}= m_{(2,1)}+2m_{(1,1,1)}$$from the following tableaux. 
$$
\begin{ytableau} 1&2\\\none&1\end{ytableau}\qquad 
\begin{ytableau} 1&2\\\none&2\end{ytableau}\qquad 
\begin{ytableau} 1&3\\\none&2\end{ytableau}\qquad 
\begin{ytableau} 2&3\\\none&1\end{ytableau}
$$
\end{example}}

As proved and illustrated \refy{by Niese et al.} \cite[Figure 1]{NSvWVW2024}, for any composition $\alpha$ we have that
$$\psi(\ex _\alpha) = \rex _\alpha$$and the proof extends to diagrams $\alpha/\lambda$, for $\lambda$ a partition, to give
$$\psi(\ex _{\alpha/\lambda}) = \rex _{\alpha/\lambda}.$$Hence, by Theorem~\ref{the:symex},  \svw{Corollary~\ref{cor:symex} and Equation~\eqref{eq:transpose},} we get the following, respectively.

\begin{theorem}\label{the:symrex}
Let $\alpha$ be a composition and $\lambda$ be a partition, such that $\lambda \subseteq \alpha$. Then
$$\rex _{\alpha/\lambda} \mbox{ is symmetric} \Leftrightarrow \alpha  \mbox{ is a partition. }$$
\end{theorem}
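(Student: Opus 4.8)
The statement follows immediately from Theorem~\ref{the:symex} together with the behaviour of the involution $\psi$. The plan is to first record the elementary fact that $\psi$ preserves the property of being symmetric, in both directions, and then transport the classification for $\ex_{\alpha/\lambda}$ across $\psi$ to obtain the classification for $\rex_{\alpha/\lambda}$.

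\textbf{Key steps.} First I would recall that $\psi$ is an involution on $\QSym$, and that by Equation~\eqref{eq:transpose} it sends the skew Schur function $s_{\lambda/\mu}$ to $s_{(\lambda/\mu)^t}$. Since the skew Schur functions $s_{\lambda/\mu}$ (indeed already the Schur functions $s_\lambda$) span $\Sym$, and transposition is a bijection on partitions and on skew shapes, $\psi$ restricts to a bijection $\Sym \to \Sym$. Consequently, for any $f \in \QSym$ we have $f \in \Sym$ if and only if $\psi(f) \in \Sym$: if $f$ is symmetric then $\psi(f)$ is symmetric by the previous sentence, and conversely if $\psi(f)$ is symmetric then $f = \psi(\psi(f))$ is symmetric as $\psi$ is an involution. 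Next I would apply this with $f = \ex_{\alpha/\lambda}$, using the identity $\psi(\ex_{\alpha/\lambda}) = \rex_{\alpha/\lambda}$ recorded just before the theorem statement. This gives that $\rex_{\alpha/\lambda}$ is symmetric if and only if $\ex_{\alpha/\lambda}$ is symmetric. Finally, invoking Theorem~\ref{the:symex}, the latter holds if and only if $\alpha$ is a partition, which completes the proof.

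\textbf{Main obstacle.} There is essentially no obstacle here; the only point requiring any care is making explicit that $\psi$ not merely maps $\Sym$ into itself but is a bijection on $\Sym$, so that the implication runs in both directions. Since $\psi$ is already known to be an involution on all of $\QSym$, this is automatic, and the proof is a one-line deduction from Theorem~\ref{the:symex} and the intertwining relation $\psi(\ex_{\alpha/\lambda}) = \rex_{\alpha/\lambda}$. (The accompanying statement, Corollary~\ref{cor:symrex}, if included, would then follow by applying $\psi$ to Corollary~\ref{cor:symex} and using Equation~\eqref{eq:transpose} to replace $s_{\alpha/\lambda}$ by $s_{(\alpha/\lambda)^t}$.)
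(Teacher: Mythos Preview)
Your proposal is correct and matches the paper's own argument exactly: the paper also deduces Theorem~\ref{the:symrex} directly from Theorem~\ref{the:symex} via the involution $\psi$ and the identity $\psi(\ex_{\alpha/\lambda})=\rex_{\alpha/\lambda}$, with Equation~\eqref{eq:transpose} then yielding Corollary~\ref{cor:symrex}. Your added remark that $\psi$ is an involution on $\QSym$ restricting to a bijection on $\Sym$ makes explicit the (implicit in the paper) reason the equivalence transfers in both directions.
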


\begin{corollary}\label{cor:symrex}
Let $\alpha$ be a composition and $\lambda$ be a partition, such that $\lambda \subseteq \alpha$ and $\rex _{\alpha/\lambda}$ is symmetric. Then
 $$\rex _{\alpha/\lambda} = s_{(\alpha/\lambda)^t}.$$
\end{corollary}

\begin{example}\label{ex:symrex} $\rex _{(3,4,4,3,1)/(2,2,1)}$ is not symmetric, however $\rex _{(4,4,3,3,1)/(2,2,1)} = s_{(5,4,4,2)/(3,2)}$ is symmetric.
\end{example}

\section{Symmetric advanced functions}\label{sec:symad}
We now classify symmetry for functions that are variants of the (row-strict) dual immaculate functions and (row-strict) extended Schur functions. Unlike these aforementioned functions, these variants are not bases for $\QSym$ because in each case the functions indexed by $(n)$ and $(1^n)$ are equal. \refz{These functions} were originally introduced \refy{by Niese et al.} \cite{NSvWVW2024}  with respect to descent sets, which \svw{they} described in the final section, and denoted by \ME{the letter} $\mathcal{A}$.  Therefore, we term these \svw{advanced functions} or \svw{advanced extended} functions, with the descriptor of strictly or weakly, depending on the row and column conditions.

\subsection{Strictly \svw{advanced functions}}\label{subsec:sadi} We begin with strict inequalities.
\begin{definition}\label{def:sadItab} Let $\alpha, \beta$ be compositions such that $\beta \subseteq \alpha$. Then $$\TadI$$is the set of all tableaux of {shape} $\alpha/\beta$ such that
\begin{enumerate}
\item the leftmost column entries that belong to $\alpha$ but not $\beta$ strictly increase from  bottom to top; 
\item the row entries strictly increase  from left to right.
\end{enumerate}
\end{definition}

\begin{example}\label{ex:sdItab} The following tableau belongs to $\mathcal{T}_{(3,4,4,3,1)/(2,1,2)}({\text{1st col}<, \text{rows}<})$. \ME{The entries in the leftmost column are italicised.}
$$\begin{ytableau}
\ME{\emph{3}}\\
\ME{\emph{2}}&3&5\\
\none&\none&3&4\\
\none&1&2&4\\
\none&\none&3
\end{ytableau}$$
\end{example}

Then for compositions $\alpha, \beta$ such that $\beta \subseteq \alpha$ the \emph{strictly \svw{advanced function}} $\bA\dI _{\alpha/\beta}$ is given by
\begin{equation}\label{eq:sadI}
\bA\dI _{\alpha/\beta} = \sum _{T\in \TadI} \xT = \sum _{\gamma \vDash |\alpha/\beta|} s{c} _{(\alpha/\beta)\gamma} M_\gamma
\end{equation}where $s{c} _{(\alpha/\beta)\gamma}$ is the number of $T\in \TadI$ with content $\gamma$. The second equality follows by definition.

\begin{theorem}\label{the:symadI} Let $\alpha, \beta$ be compositions such that $\beta\subseteq \alpha$. Then
$$\bA\dI _{\alpha/\beta} \mbox{ is symmetric} \Leftrightarrow \alpha _i = 1 \mbox{ for } \ell(\beta)+1 \leq i \leq \ell(\alpha)-1.$$
\end{theorem}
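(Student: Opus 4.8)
The plan is to mirror the proof of Theorem~\ref{the:symdI}: first reduce to the case $\beta=\emptyset$ by a product decomposition, and then settle that case by hand. First I would write $D=\alpha/\beta$, let $D_1$ be the subdiagram on rows $\ell(\beta)+1,\dots,\ell(\alpha)$ and let $D_2$ be the bottom $\ell(\beta)$ rows. Since $\beta_i\ge 1$ for $1\le i\le\ell(\beta)$, no cell of $D_2$ lies in the leftmost column of $D$, so in the definition of $\TadI$ the rows of $D_2$ carry only the strict-increase-along-rows condition and may be filled independently of $D_1$; the generating function of row $i$ of $D_2$ is therefore $s_{(1^{\alpha_i-\beta_i})}$, and
$$\bA\dI_{\alpha/\beta}=\bA\dI_{D_1}\cdot\prod_{i=1}^{\ell(\beta)}s_{(1^{\alpha_i-\beta_i})}.$$
By Lemma~\ref{lem:symprod}, $\bA\dI_{\alpha/\beta}$ is symmetric if and only if $\bA\dI_{D_1}$ is, where $D_1$ is the composition diagram $(\alpha_{\ell(\beta)+1},\dots,\alpha_{\ell(\alpha)})$ with empty inner shape.

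For the case $\beta=\emptyset$ the key observation is that, because rows strictly increase from the left and entries are positive integers, the entry $1$ can occur only in the leftmost column; as that column is strictly increasing, $1$ occurs at most once in any $T\in\TadI$. Hence $sc_{\alpha\gamma}=0$ whenever $\gamma_1\ge 2$, for every composition $\alpha$. Since any composition with largest part $\ge 2$ has a rearrangement with first part $\ge 2$, and since $\bA\dI_\alpha$ symmetric forces $[M_\gamma]\bA\dI_\alpha=[M_{\gamma'}]\bA\dI_\alpha$ whenever $\lambda(\gamma)=\lambda(\gamma')$, symmetry of $\bA\dI_\alpha$ forces $sc_{\alpha\gamma}=0$ for all $\gamma$ with $\lambda(\gamma)\ne(1^{|\alpha|})$. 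Thus $\bA\dI_\alpha$ is symmetric if and only if every $T\in\TadI$ is standard, and in that case $\bA\dI_\alpha=N\,s_{(1^{|\alpha|})}$ for some integer $N$, which is symmetric.

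It then remains to show that every $T\in\TadI$ is standard if and only if $\alpha=(1^{\ell-1},k)$ for some $k$, where $\ell=\ell(\alpha)$. If $\alpha=(1^{\ell-1},k)$ the shape forces the single chain $T(1,1)<\cdots<T(\ell,1)<T(\ell,2)<\cdots<T(\ell,k)$, so $T$ is standard. Conversely, if $\alpha_j\ge 2$ for some $j\le\ell-1$, I would exhibit a non-standard $T$: fill rows $1,\dots,j$ in reading order (bottom row first, left to right within rows) with $1,\dots,Q$ where $Q=\alpha_1+\cdots+\alpha_j$, then set $T(j+1,1):=T(j,2)$, and fill all remaining cells in reading order with $Q+1,Q+2,\dots$. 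One checks, using $\alpha_j\ge 2$, that the leftmost column remains strictly increasing across rows $j,j+1,j+2$ and that every row is strictly increasing, so $T\in\TadI$ while $T(j,2)=T(j+1,1)$ is a repeat. Combining the three steps, $\bA\dI_{\alpha/\beta}$ is symmetric if and only if $(\alpha_{\ell(\beta)+1},\dots,\alpha_{\ell(\alpha)})=(1^{\,\ell(\alpha)-\ell(\beta)-1},k)$, that is, $\alpha_i=1$ for $\ell(\beta)+1\le i\le\ell(\alpha)-1$.

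The one non-routine point is the explicit construction in the last step: I expect the main obstacle to be verifying that it lands in $\TadI$ in the boundary cases $j=\ell-1$, $\alpha_{j+1}=1$, and $\ell$ small, which comes down to a short case-check on the column-$1$ and row inequalities at the junction rows $j$, $j+1$, $j+2$. Everything else is bookkeeping parallel to Theorem~\ref{the:symdI}; alternatively, the case $\beta=\emptyset$ could be run exactly as there, by producing a concrete content $\gamma$ with $sc_{\alpha\gamma}\neq sc_{\alpha\gamma'}$ for a rearrangement $\gamma'$.
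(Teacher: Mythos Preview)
Your proposal is correct and follows essentially the same route as the paper: the identical product decomposition $\bA\dI_{\alpha/\beta}=\bA\dI_{D_1}\prod_{i=1}^{\ell(\beta)}s_{(1^{\alpha_i-\beta_i})}$ with Lemma~\ref{lem:symprod} reduces to $\beta=\emptyset$, and both arguments then rest on the same key observation that the entry $1$ can appear at most once in any $T\in\mathcal{T}_\alpha(\text{1st col}<,\text{rows}<)$. Your intermediate characterization ``symmetric $\Leftrightarrow$ every tableau has distinct entries'' is just a clean repackaging of the paper's direct argument, and your explicit non-standard tableau differs only cosmetically from the paper's (you allow any $j\le\ell-1$ with $\alpha_j\ge2$, whereas the paper takes the lowest such row, which slightly simplifies the column-$1$ checks you flag as the ``one non-routine point'').
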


\begin{proof} {In this \refy{proof,} we will compute the expansion of $\bA\dI _{\alpha/\beta}$ in terms of monomial quasisymmetric functions using Equation~\eqref{eq:sadI}, and so need to calculate all $T\in \TadI$ that have content $\gamma$ where $\gamma \vDash |\alpha/\beta|$.}

We begin with the case $\beta = \emptyset$. Note that in this case, \ME{for the statement to hold} if $\alpha \vDash n$ then $\alpha=(1^{k},n-k)$ for some \finv{$0\leq k\leq n-1$} is the composition under consideration, \svw{which corresponds to a row if $k=0$ and a column if \finv{$k=n-1$.}} For one direction, let $\alpha=(1^{k},n-k)$.  Then for every $T\in \mathcal{T}_{\alpha}({\text{1st col}<, \text{rows}<})$ the entries in the leftmost column must  strictly increase and be $1,2,\ldots,k+1$. Thereafter, the entries in the row of length $n-k$  must be $k+1,\ldots,n$ from left to right, by definition. This tableau $T$ is unique, and therefore we get $$\JL{\bA\dI _{\alpha}=M_{(1^n)}=s_{(1^n)}}$$by definition, which is symmetric.

For the other direction,  if $\alpha \neq (1^{k},n-k)$, then in the leftmost column of $\alpha$ there must be at least one row with more than one cell, and at least one cell above it. Choose the lowest such row, row $i$. Then  fill the cells of the leftmost column with $1,\ldots,i$ going up the column. Then fill the cell to the immediate right and above the cell filled with $i$ each with an $i+1$; fill the remaining cells with $i+2,\ldots,n-1$ \ME{in order from the bottom row, left to right.} So the filling of this tableau $T\in \mathcal{T}_{\alpha}({\text{1st col}<, \text{rows}<})$ contains two $i+1$'s and only one $1$. Now observe that there is no way to fill a tableau of shape $\alpha$ with two 1's such that it belongs to $\mathcal{T}_{\alpha}({\text{1st col}<, \text{rows}<})$. So our function indexed by $\alpha$ is not symmetric. This concludes the case $\beta = \emptyset$.

We now proceed with the case $\beta \neq \JL{\emptyset}$, which is similar to the analogous part of the proof of Theorem~\ref{the:symdI}. If we set $D=\alpha/\beta$, then it contains two subdiagrams:  $D_1$ from row $\ell(\beta)+1$ \svw{to} row $\ell(\alpha)$, and the remainder $D_2$, consisting of the bottom $\ell(\beta)$ rows. \JL{So, in particular, $D_1=(\alpha_{\ell(\beta)+1},\ldots,\alpha_{\ell(\alpha)})$.}

    Any tableau \JL{$T\in  \TadI$} of shape $D$ restricted to $D_1$ is a tableau \JL{in $\mathcal{T}_{D_1}({\text{1st col}<, \text{rows}<})$} and vice versa. 
   \refy{Hence,} the fillings of this subdiagram $D_1$
 contribute $\bA\dI_{D_1}$ to $\bA\dI_{\alpha/\beta}$. This is because the filling of the cells of $D_1$ is independent from that of $D_2$ because $D_2$ has no cells in the leftmost column. Moreover, because $D_2$ has no cells in the leftmost column of $D$, by the definition of $\TadI$, the cells in every row of $D_2$ need to be filled with entries strictly increasing from left to right, and there is no restriction for the entries in the columns. Therefore, the generating function for $D_2$ is 
    \[
    \prod_{i=1}^{\ell(\beta)} s_{(1^{\alpha_i-\beta_i})}.
    \]
    \refy{Hence,} 
    \[\bA\dI_{\alpha/\beta}=\bA\dI_{D_1} \prod_{i=1}^{\ell(\beta)} s_{(1^{\alpha_i-\beta_i})}. 
    \]
    It follows from Lemma~\ref{lem:symprod} that $\bA\dI_{\alpha/\beta}$ is symmetric if and only if $\bA\dI_{D_1}$ is symmetric, and by the first part of this proof, this is the case if and only if $(\alpha_{\ell(\beta)+1},\ldots,\alpha_{\JL{\ell(\alpha)}})=(1,\ldots,1, k)$ \svw{for some $k\geq 1$,} or equivalently, $\alpha _i = 1 \mbox{ for } \ell(\beta)+1 \leq i \leq \ell(\alpha)-1$.
\end{proof}

{\begin{example}\label{ex:Aproof}
The following demonstrates the construction in the first part of the proof, \finv{where $\alpha = (1,3,4,4,3)$ and} $i=2$.
$$\begin{ytableau}
12&13&14\\
8&9&10&11\\
3&5&6&7\\
2&3&4\\
1
\end{ytableau}$$
\end{example}

Thus, by the definition of skew Schur functions, we get the following. 
 \begin{corollary}\label{cor:symadI}
 Let $\alpha, \beta$ be compositions such that $\beta\subseteq \alpha$ and $\bA\dI _{\alpha/\beta}$ is symmetric. Then
 $$\bA\dI _{\alpha/\beta} = s_{(1^{\ell(\alpha)-\ell(\beta)-1+\alpha_{\ell(\alpha)}})}\left( \prod _{i=1} ^{\ell(\beta)} s_{(1^{\alpha _i - \beta _i})}\right).$$
 \end{corollary}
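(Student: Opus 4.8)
The plan is to read off Corollary~\ref{cor:symadI} directly from Theorem~\ref{the:symadI} and its proof, exactly as Corollary~\ref{cor:symdI} was deduced from Theorem~\ref{the:symdI}. Assume $\bA\dI_{\alpha/\beta}$ is symmetric, so by Theorem~\ref{the:symadI} we have $\alpha_i = 1$ for $\ell(\beta)+1 \leq i \leq \ell(\alpha)-1$; that is, $D_1 = (\alpha_{\ell(\beta)+1}, \ldots, \alpha_{\ell(\alpha)}) = (1^{\ell(\alpha)-\ell(\beta)-1}, k)$ where $k = \alpha_{\ell(\alpha)}$. The factorization established in the proof of Theorem~\ref{the:symadI},
\[
\bA\dI_{\alpha/\beta} = \bA\dI_{D_1} \prod_{i=1}^{\ell(\beta)} s_{(1^{\alpha_i-\beta_i})},
\]
already does the work for the $D_2$ part: each of the bottom $\ell(\beta)$ rows contributes a single-row strictly-increasing factor $s_{(1^{\alpha_i-\beta_i})}$, since a strictly increasing row of length $\alpha_i - \beta_i$ is exactly a standard column, giving $s_{(1^{\alpha_i-\beta_i})}$.

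For the $D_1$ factor, recall from the $\beta = \emptyset$ case of the proof that for $\alpha = (1^m, k)$ the set $\mathcal{T}_{\alpha}({\text{1st col}<,\text{rows}<})$ has a unique element, so $\bA\dI_{(1^m,k)} = M_{(1^{m+k})} = s_{(1^{m+k})}$. Here $m = \ell(\alpha)-\ell(\beta)-1$ and $k = \alpha_{\ell(\alpha)}$, so $\bA\dI_{D_1} = s_{(1^{\ell(\alpha)-\ell(\beta)-1+\alpha_{\ell(\alpha)}})}$. Substituting these two computations into the displayed factorization yields
\[
\bA\dI_{\alpha/\beta} = s_{(1^{\ell(\alpha)-\ell(\beta)-1+\alpha_{\ell(\alpha)}})}\left( \prod_{i=1}^{\ell(\beta)} s_{(1^{\alpha_i-\beta_i})}\right),
\]
which is the claimed identity. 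The only mild subtlety, and the step I would spell out, is checking the degenerate boundary cases: when $\ell(\beta) = 0$ the product is empty and the statement reduces to the already-proven $\bA\dI_\alpha = s_{(1^n)}$; and when $\ell(\alpha) = \ell(\beta)$ (so $D_1$ is empty), both sides should be interpreted so that the single-column exponent is just $\alpha_{\ell(\alpha)} - 1 = \alpha_{\ell(\beta)} - \beta_{\ell(\beta)} $-style bookkeeping still matches — but since we assume throughout that every row of $\alpha/\beta$ has a cell, $D_1$ is nonempty precisely when $\ell(\alpha) > \ell(\beta)$, and the formula is consistent. No serious obstacle arises; this corollary is a direct translation of the theorem's proof into the language of skew Schur functions via Equation~\eqref{eq:schur}, analogous to Corollary~\ref{cor:symdI}.
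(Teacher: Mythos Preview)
Your proposal is correct and follows the paper's approach: the paper's entire proof of this corollary is the single line ``Thus, by the definition of skew Schur functions, we get the following,'' and you have simply unpacked what that means using the factorization $\bA\dI_{\alpha/\beta}=\bA\dI_{D_1}\prod_{i=1}^{\ell(\beta)} s_{(1^{\alpha_i-\beta_i})}$ from the proof of Theorem~\ref{the:symadI} together with the identification $\bA\dI_{(1^m,k)}=s_{(1^{m+k})}$ established there. One small imprecision: the set $\mathcal{T}_{(1^m,k)}(\text{1st col}<,\text{rows}<)$ is not literally a singleton; rather, every tableau in it has all distinct entries, so the generating function is $e_{m+k}=M_{(1^{m+k})}=s_{(1^{m+k})}$, which is the conclusion you use.
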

 
\begin{example}\label{ex:symadI}
$\bA\dI _{(3,4,4,1,3)/(2,1,2)} = s_{(1^4)}s_{(1)}s_{(1^3)}s_{(1^2)}$ is symmetric.
\end{example}

\subsection{Weakly \svw{advanced functions}}\label{subsec:radI} We now exchange our strict inequalities for weak inequalities.
\begin{definition}\label{def:radItab} Let $\alpha, \beta$ be compositions such that $\beta \subseteq \alpha$. Then $$\TradI$$is the set of all tableaux of {shape} $\alpha/\beta$ such that
\begin{enumerate}
\item the leftmost column entries that belong to $\alpha$ but not $\beta$ weakly increase from  bottom to top; 
\item the row entries weakly increase  from left to right.
\end{enumerate}
\end{definition}

\begin{example}\label{ex:radItab} The following tableau belongs to $\mathcal{T}_{(3,4,4,3,1)/(2,1,2)}({\text{1st col}\leq, \text{rows}\leq})$. \ME{The entries in the leftmost column are italicised.}
$$\begin{ytableau}
\ME{\emph{2}}\\
\ME{\emph{2}}&3&5\\
\none&\none&3&3\\
\none&1&2&4\\
\none&\none&4
\end{ytableau}$$
\end{example}

Then for compositions $\alpha, \beta$ such that $\beta \subseteq \alpha$ the \emph{weakly \svw{advanced  function}} $\cA\dI _{\alpha/\beta}$ is given by
\begin{equation}\label{eq:radI}
\cA\dI _{\alpha/\beta} = \sum _{T\in \TradI} \xT = \sum _{\gamma \vDash |\alpha/\beta|}  wc _{(\alpha/\beta)\gamma} M_\gamma
\end{equation}where $wc _{(\alpha/\beta)\gamma}$ is the number of $T\in \TradI$ with content $\gamma$. The second equality follows by definition.

As shown \refy{by Niese et al.} \cite[Figure 1]{NSvWVW2024}, for any composition $\alpha$ we have that
$$\psi(\bA\dI _\alpha) = \cA\dI _\alpha$$and the proof \svw{extends} to diagrams $\alpha/\beta$ for $\beta$ a composition, to give
$$\psi(\bA\dI _{\alpha/\beta}) = \cA\dI _{\alpha/\beta}.$$Hence, we immediately obtain the following two results for weakly \svw{advanced functions} from Theorem~\ref{the:symadI} and Corollary~\ref{cor:symadI}, using \svw{Equation~\eqref{eq:transpose}} for the latter.

\begin{theorem}\label{the:symradI} Let $\alpha, \beta$ be compositions such that $\beta\subseteq \alpha$. Then
$$\cA\dI _{\alpha/\beta} \mbox{ is symmetric} \Leftrightarrow \alpha _i = 1 \mbox{ for } \ell(\beta)+1 \leq i \leq \ell(\alpha) -1.$$
\end{theorem}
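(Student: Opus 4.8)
The plan is to deduce Theorem~\ref{the:symradI} directly from Theorem~\ref{the:symadI} using the involution $\psi$, exactly in the spirit of how Theorem~\ref{the:symrdI} was obtained from Theorem~\ref{the:symdI} and Theorem~\ref{the:symrex} from Theorem~\ref{the:symex}. The key input, which the excerpt states is available from \cite[Figure 1]{NSvWVW2024} and its extension to skew shapes, is the identity $\psi(\bA\dI_{\alpha/\beta}) = \cA\dI_{\alpha/\beta}$ for any compositions $\beta \subseteq \alpha$. Since $\psi$ is an involution on $\QSym$ that restricts to an automorphism of $\Sym$ (indeed $\psi(s_{\lambda/\mu}) = s_{(\lambda/\mu)^t}$ by Equation~\eqref{eq:transpose}, so $\psi$ maps $\Sym$ bijectively onto $\Sym$), a quasisymmetric function $f$ is symmetric if and only if $\psi(f)$ is symmetric.

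Carrying this out: first I would invoke $\psi(\bA\dI_{\alpha/\beta}) = \cA\dI_{\alpha/\beta}$. Then I would observe that $\cA\dI_{\alpha/\beta} = \psi(\bA\dI_{\alpha/\beta})$ is symmetric if and only if $\bA\dI_{\alpha/\beta}$ is symmetric, because $\psi$ preserves $\Sym$ and $\psi^{-1} = \psi$. Finally I would apply Theorem~\ref{the:symadI}, which tells us $\bA\dI_{\alpha/\beta}$ is symmetric if and only if $\alpha_i = 1$ for $\ell(\beta)+1 \leq i \leq \ell(\alpha)-1$. Chaining these equivalences gives the claimed characterization, and the proof is complete in a couple of lines.

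The one genuine point to be careful about — and the only place where anything could go wrong — is the justification that $\psi$ sends symmetric functions to symmetric functions and reflects symmetry. This is immediate from Equation~\eqref{eq:transpose}: $\psi$ permutes the skew Schur functions (and in particular the Schur basis $\{s_\lambda\}$) of each graded piece, hence $\psi(\Sym^n) = \Sym^n$; being an involution, $f \in \Sym \iff \psi(f) \in \Sym$. Everything else is formal. So I do not anticipate a real obstacle; the "hard part" was already done in establishing Theorem~\ref{the:symadI} and in the cited extension of the $\psi$-identity to skew shapes. The proof I would write is essentially:

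\begin{proof}
By \cite[Figure 1]{NSvWVW2024} and its extension to skew shapes we have $\psi(\bA\dI_{\alpha/\beta}) = \cA\dI_{\alpha/\beta}$. By Equation~\eqref{eq:transpose}, $\psi$ permutes the Schur functions in each graded piece of $\Sym$, so $\psi(\Sym) = \Sym$; since $\psi$ is an involution, $\cA\dI_{\alpha/\beta} = \psi(\bA\dI_{\alpha/\beta})$ is symmetric if and only if $\bA\dI_{\alpha/\beta}$ is symmetric. By Theorem~\ref{the:symadI} this holds if and only if $\alpha_i = 1$ for $\ell(\beta)+1 \leq i \leq \ell(\alpha)-1$.
\end{proof}
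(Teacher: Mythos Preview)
Your proposal is correct and matches the paper's approach exactly: the paper also states that $\psi(\bA\dI_{\alpha/\beta}) = \cA\dI_{\alpha/\beta}$ (citing \cite[Figure 1]{NSvWVW2024} and its extension to skew shapes) and then says Theorem~\ref{the:symradI} follows immediately from Theorem~\ref{the:symadI}. Your added remark that $\psi$ preserves and reflects symmetry via Equation~\eqref{eq:transpose} makes explicit what the paper leaves implicit.
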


 \begin{corollary}\label{cor:symradI}
Let $\alpha, \beta$ be compositions such that $\beta\subseteq \alpha$ and $\cA\dI _{\alpha/\beta}$ is symmetric. Then
 $$\cA\dI _{\alpha/\beta} = s_{(\ell(\alpha)-\ell(\beta) -1+\alpha_{\ell(\alpha)})}\left( \prod _{i=1} ^{\ell(\beta)} s_{({\alpha _i - \beta _i})}\right).$$
 \end{corollary}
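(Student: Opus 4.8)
The plan is to prove Corollary~\ref{cor:symradI} by transporting Corollary~\ref{cor:symadI} through the involution $\psi$, in exactly the way the preceding row-strict corollaries (Corollaries~\ref{cor:symrdI} and \ref{cor:symrex}) are deduced from their non-row-strict counterparts. First I would invoke the fact, stated just before the corollary, that $\psi(\bA\dI_{\alpha/\beta}) = \cA\dI_{\alpha/\beta}$ for $\beta$ a composition. Since $\cA\dI_{\alpha/\beta}$ is symmetric precisely when $\bA\dI_{\alpha/\beta}$ is (the involution $\psi$ preserves symmetry, as it restricts to the transpose map on $\Sym$), the hypothesis that $\cA\dI_{\alpha/\beta}$ is symmetric is equivalent to $\bA\dI_{\alpha/\beta}$ being symmetric, and hence Theorem~\ref{the:symradI} and Corollary~\ref{cor:symadI} both apply to $\alpha/\beta$.

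Next I would apply $\psi$ to the explicit product expression for $\bA\dI_{\alpha/\beta}$ furnished by Corollary~\ref{cor:symadI}, namely
\[
\bA\dI_{\alpha/\beta} = s_{(1^{\ell(\alpha)-\ell(\beta)-1+\alpha_{\ell(\alpha)}})}\left( \prod_{i=1}^{\ell(\beta)} s_{(1^{\alpha_i-\beta_i})}\right).
\]
Because $\psi$ is an algebra homomorphism (indeed a Hopf-algebra involution) it distributes over the product, and by Equation~\eqref{eq:transpose} it sends each single-column Schur function $s_{(1^m)}$ to the single-row Schur function $s_{(m)}$, since $(1^m)^t = (m)$. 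Thus
\[
\cA\dI_{\alpha/\beta} = \psi\bigl(\bA\dI_{\alpha/\beta}\bigr) = s_{(\ell(\alpha)-\ell(\beta)-1+\alpha_{\ell(\alpha)})}\left( \prod_{i=1}^{\ell(\beta)} s_{(\alpha_i-\beta_i)}\right),
\]
which is exactly the claimed formula.

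The only point requiring a little care — and the closest thing to an obstacle — is the bookkeeping of the indices, in particular checking that the single cell of the composition $(1^{m})$ with $m = \ell(\alpha)-\ell(\beta)-1+\alpha_{\ell(\alpha)}$ transposes to the single part $(m)$ with the identical value of $m$, and that the individual row factors $s_{(1^{\alpha_i-\beta_i})}$ transpose to $s_{(\alpha_i-\beta_i)}$; both are immediate from $(1^m)^t=(m)$. I would also remark, to match the style of Example~\ref{ex:symradI} that the reader would expect, that for $\alpha = (3,4,4,1,3)$, $\beta = (2,1,2)$ one gets $\cA\dI_{(3,4,4,1,3)/(2,1,2)} = s_{(4)}s_{(1)}s_{(3)}s_{(2)}$, the transpose of the product in Example~\ref{ex:symadI}. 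No genuinely new combinatorial argument is needed; the corollary is a formal consequence of the material already established.
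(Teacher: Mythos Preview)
Your proposal is correct and follows essentially the same approach as the paper: the paper also derives Corollary~\ref{cor:symradI} by applying $\psi$ to Corollary~\ref{cor:symadI} via $\psi(\bA\dI_{\alpha/\beta})=\cA\dI_{\alpha/\beta}$ and Equation~\eqref{eq:transpose}. If anything, you have supplied more detail than the paper, which simply states that the result is immediate.
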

 
\begin{example}\label{ex:symradI}
$\cA\dI _{(3,4,4,1,3)/(2,1,2)} = s_{(4)}s_{(1)}s_{(3)}s_{(2)}$ is symmetric.
\end{example}

\subsection{Strictly \svw{advanced extended} functions}\label{subsec:sex}
We now \refy{investigate} our final pair of functions, for which we will use a different approach. 

\begin{definition}\label{def:sextab} Let $\alpha$ be a composition, and $\lambda$ be a partition such that $\lambda \subseteq \alpha$. Then $$\Tsex$$is the set of all tableaux of {shape} $\alpha/\lambda$ such that
\begin{enumerate}
\item the column entries strictly increase from  bottom to top; 
\item the row entries strictly increase  from left to right.
\end{enumerate}
\end{definition}

\begin{example}\label{ex:sextab} The following tableau belongs to $\mathcal{T}_{(3,4,4,3,1)/(2,2,1)}({\text{cols}<, \text{rows}<})$.
$$\begin{ytableau}
3\\
2&4&5\\
\none&2&3&4\\
\none&\none&2&3\\
\none&\none&1
\end{ytableau}$$
\end{example}

Then for a composition $\alpha$ and partition $\lambda$ such that $\lambda \subseteq \alpha$ the \emph{strictly \svw{advanced extended} function} $\bA\ex _{\alpha/\lambda}$ is given by
\begin{equation}\label{eq:sex}
\bA\ex _{\alpha/\lambda} = \sum _{T\in \Tsex} \xT = \sum _{\gamma \vDash |\alpha/\lambda|} sd _{(\alpha/\lambda)\gamma} M_\gamma
\end{equation}where $sd _{{(\alpha/\lambda)}\gamma}$ is the number of $T\in \Tsex$ with content $\gamma$. The second equality follows by definition. The necessity for \ME{$\lambda$ being a partition} is discussed in the final section, for the interested reader.

We now introduce some notation that will allow us to prove our classification succinctly, and it should be noted that we could also prove our result directly by translating the results of \refy{Gillespie et al.} \cite{gps} \finv{wholly} into the language of tableaux.

Following  \refy{Gillespie et al.} \cite{gps}, for every vertex-labelled directed graph $G=(V,E)$ \svw{with vertex labels $1, \ldots, |V|$,} denote by $K(G)$ the set of all \ME{its \svw{\emph{proper colourings}} $\kappa : |V| \rightarrow \{1,2,\ldots\}$ such that if two vertices $v_i, v_j \in V$ are adjacent, then $\kappa(v_i)\neq \kappa(v_j).$} An \emph{ascent} of $\kappa\in K(G)$ is an adjacent pair of labelled vertices $i<j$ with $\kappa(i)<\kappa(j)$. We write $\asc(\kappa)$ for the number of ascents of $\kappa$. Denote by $K(G)^{|E|}$ the set of proper colourings of $G$ with $|E|$ \svw{ascents.}
The {\em chromatic quasisymmetric function}  $X_G(x;q)$ is defined as
\begin{equation*}  X_G(x;q)=\sum_{\kappa\in K(G)}q^{\asc(\kappa)}x_{\kappa(1)}\cdots \svw{x_{\kappa(|V|)}.}
\end{equation*}

We will now construct a vertex-labelled acyclic directed  graph $G_{\alpha/\lambda} = (V,E)$ from the diagram $\alpha/\lambda$ as follows. In a diagram, we say two cells  are \emph{adjacent} in a row (respectively, column) if both cells are in the same row (respectively, column) and there is no cell between them. \refz{Note that if two adjacent cells are in the same column, then there may be empty space between them, as in Example~\ref{ex:Graph}.} Let $V$ be in natural bijection with the cells of the diagram $\alpha/\lambda$, and obtain $E$ by drawing a directed edge between the vertices corresponding to two adjacent cells going from left to right in a row or from bottom to top in a column. The resulting directed graph $G$ is an acyclic directed graph. Finally, label the cells in $\alpha/\lambda$ from left to right in a row, taking the rows from bottom to top. This induces a natural vertex-labelling  on $G$ giving us our vertex-labelled acyclic directed  graph $G_{\alpha/\lambda}$.

\begin{example}\label{ex:Graph} The labelled diagram $(3,4,3,4,1)/(2,2, 1)$ has $G_{(3,4,3,4,1)/(2,2,1)}$ below.
$$\begin{ytableau}
10 \\
6&7&8&9\\
\none&4&5\\
\none&\none&2 &3 \\
\none&\none&1
\end{ytableau}$$$$
\begin{tikzpicture}[scale=.75, baseline]
\coordinate (A) at (0,4);
\coordinate (B) at (0,3);
\coordinate (C) at (1,3);
\coordinate (D) at (2,3);
\coordinate (E) at (3,3);
\coordinate (F) at (1,2);
\coordinate (G) at (2,2);
\coordinate (H) at (2,1);
\coordinate (I) at (3,1);
\coordinate (J) at (2,0);
\filldraw (A) circle (1.5pt);
\filldraw (B) circle (1.5pt);
\filldraw (C) circle (1.5pt);
\filldraw (D) circle (1.5pt);
\filldraw (E) circle (1.5pt);
\filldraw (F) circle (1.5pt);
\filldraw (G) circle (1.5pt);
\filldraw (H) circle (1.5pt);
\filldraw (I) circle (1.5pt);
\filldraw (J) circle (1.5pt);
\draw[thick, -latex] (B) -- (A);
\draw[thick, -latex] (B) -- (C);
\draw[thick, -latex] (C) -- (D);
\draw[thick, -latex] (D) -- (E);
\draw[thick, -latex] (F) -- (C);
\draw[thick, -latex] (F) -- (G);
\draw[thick, -latex] (G) -- (D);
\draw[thick, -latex] (H) -- (G);
\draw[thick, -latex] (J) -- (H);
\draw[thick, -latex] (H) -- (I);
\draw[thick, -latex] (J) -- (H);
\draw[thick, -latex] (I) -- (E);
\end{tikzpicture}$$
\end{example}

 Moreover, this construction naturally induces a map  $$\theta:\Tsex\to K(G_{\alpha/\lambda})^{|E|},$$namely, for every $T\in \Tsex$ we have that $\theta(T)$ is the colouring of $G_{\alpha/\lambda}$ such that $i$ is the entry in cell $c$ in $T$ if and only if $i$ is the colour of the vertex corresponding to $c$ in $G_{\alpha/\lambda}$.
\begin{lemma}\label{lem:theta}
   The map  $\theta:\Tsex\to K(G _{\alpha/\lambda})^{|E|}$ is a bijection.
\end{lemma}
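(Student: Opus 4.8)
The plan is to show $\theta$ is a bijection by constructing an explicit inverse and checking that both $\theta$ and its candidate inverse land in the claimed sets. The key observation is that the tableau conditions defining $\Tsex$ (strict increase along rows left-to-right, strict increase along columns bottom-to-top) are precisely the edge relations of $G_{\alpha/\lambda}$: an edge goes from a cell to its right-neighbour in a row and from a cell to the cell above it in a column. So if $T\in\Tsex$, then for any edge of $G_{\alpha/\lambda}$ the source cell has a strictly smaller entry than the target cell; in particular adjacent vertices receive distinct values, so $\theta(T)$ is a proper colouring. Moreover every edge $i\to j$ (with $i<j$ in the vertex labelling, which holds automatically since the labelling goes left-to-right along rows and then bottom-to-top) satisfies $\kappa(i)<\kappa(j)$, hence every edge is an ascent and $\asc(\theta(T))=|E|$. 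Thus $\theta$ indeed maps into $K(G_{\alpha/\lambda})^{|E|}$.

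Next I would define the inverse $\theta^{-1}:K(G_{\alpha/\lambda})^{|E|}\to\Tsex$ in the obvious way: given a colouring $\kappa$ with $|E|$ ascents, place $\kappa(v)$ in the cell corresponding to $v$. The content is the same combinatorial data, so $\theta\circ\theta^{-1}$ and $\theta^{-1}\circ\theta$ are identities provided $\theta^{-1}$ genuinely produces a tableau in $\Tsex$. This is where the hypothesis $\asc(\kappa)=|E|$ does the work: since $\kappa$ is proper, every edge $i\to j$ of $G_{\alpha/\lambda}$ is either an ascent ($\kappa(i)<\kappa(j)$) or a ``descent'' ($\kappa(i)>\kappa(j)$), and there are exactly $|E|$ edges; if even one edge failed to be an ascent we would have $\asc(\kappa)<|E|$. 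Hence \emph{every} edge is an ascent, which says exactly that entries strictly increase along each row from left to right and strictly increase up each column — i.e.\ the resulting filling satisfies both conditions of Definition~\ref{def:sextab}, so it lies in $\Tsex$.

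I would include a short remark that the edge set $E$ of $G_{\alpha/\lambda}$ records precisely the adjacent-in-a-row and adjacent-in-a-column pairs, and that strict increase on \emph{adjacent} cells propagates to strict increase on \emph{all} cells in a row or column by transitivity; this is the one small point needing a sentence, though it is immediate. The main (and only mild) obstacle is making the ascent bookkeeping airtight: one must be sure that the vertex-labelling defined before the lemma — left to right within a row, rows from bottom to top — always labels the source of a row-edge or column-edge with the smaller integer, so that ``ascent'' and ``the inequality points the right way'' coincide for every edge. For row-edges this is clear since we label left to right; for column-edges it holds because an entire lower row is labelled before the row above it. Once this is observed, the count $\asc(\kappa)=|E|$ is equivalent to ``all edge-inequalities hold in the correct direction,'' and the bijection follows with no further computation.
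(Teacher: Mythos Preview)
Your proposal is correct and follows essentially the same approach as the paper's proof: both verify that $\theta$ lands in $K(G_{\alpha/\lambda})^{|E|}$ because the tableau inequalities make every edge an ascent, and both argue that a colouring with $|E|$ ascents forces every edge to be an ascent, yielding a tableau in $\Tsex$. Your version is somewhat more explicit---you construct the inverse map directly and spell out why the vertex labelling makes edge direction coincide with label order---but these are details the paper leaves implicit rather than a different argument.
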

\begin{proof}
    Given a tableau $
    T\in \Tsex$, any directed edge in the corresponding colouring of $G _{\alpha/\lambda}$ will point from a vertex with a smaller label to a vertex with a larger label. \refy{Hence,} every edge in $E$ is an ascent. It follows that $\theta(T)\in K(G _{\alpha/\lambda})^{|E|}$. On the other hand, any colouring of $G_{\alpha/\lambda}$ with $|E|$ ascents must have every edge contributing an ascent, which forces the entries in every row, from left to right, and every column, from bottom to top, to be strictly increasing in the corresponding tableau $S$ of shape $\alpha/\lambda$, thus $S\in \Tsex$. Hence, $\theta$ \finv{is}  \ME{well-defined and a} bijection.
\end{proof}

 This lemma implies that $\bA\ex _{\alpha/\lambda}$ is the coefficient of $q^{|E|}$ in $\svw{X_{G_{\alpha/\lambda}}}(x;q)$ as formal power series in $\QSym$, that is, $$\bA\ex _{\alpha/\lambda}=[q^{|E|}]\svw{X_{G_{\alpha/\lambda}}}(x;q)$$and is the key to our proof.

\begin{theorem}\label{the:symsex}
Let $\alpha$ be a composition and $\lambda$ be a partition, such that $\lambda \subseteq \alpha$. Then
$$\bA\ex _{\alpha/\lambda} \mbox{ is symmetric} \Leftrightarrow \alpha/\lambda = (1^k, |\alpha/\lambda|-k)  \mbox{ for some } 0\leq k \leq \finv{|\alpha / \lambda| -1.}$$
\end{theorem}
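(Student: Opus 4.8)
The plan is to leverage the identification $\bA\ex_{\alpha/\lambda}=[q^{|E|}]X_{G_{\alpha/\lambda}}(x;q)$ from Lemma~\ref{lem:theta} together with the structural results on chromatic quasisymmetric functions from \cite{gps}. For the backward direction, if $\alpha/\lambda=(1^k,|\alpha/\lambda|-k)$ then $\alpha$ is a partition, so $\bA\ex_{\alpha/\lambda}$ is the ``super'' (row- and column-strict) skew Schur function of a hook-type shape; one checks directly that this equals a skew Schur function $s_{(\alpha/\lambda)}$ composed suitably with itself via transposition of parts, hence is symmetric. Concretely, the column-strict and row-strict conditions on a single-row-plus-column diagram force a unique ``staircase-like'' structure, and the resulting generating function is a product/single Schur function; I would record the explicit identification (as in the pattern of Corollaries~\ref{cor:symadI} and \ref{cor:symsex} to come) and invoke Lemma~\ref{lem:symprod} if a factorization appears.

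For the forward direction I would argue contrapositively: if $\alpha/\lambda$ is not of hook shape $(1^k,m-k)$, then $\bA\ex_{\alpha/\lambda}$ is not symmetric. The key input is the classification from \cite{gps} of when the coefficient of the top power of $q$ in $X_G(x;q)$ is symmetric, equivalently when $G_{\alpha/\lambda}$ is (up to the relevant equivalence) a path or disjoint union of paths — since $G_{\alpha/\lambda}$ is connected by our standing assumption, this means a single directed path. I would show that the diagram $\alpha/\lambda$ yields a graph $G_{\alpha/\lambda}$ that is a single directed path if and only if $\alpha/\lambda$ has at most one row with more than one cell and, moreover, that multi-cell row sits at the top (so that the column edges and the row edges concatenate without branching). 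Any other configuration creates a vertex of out-degree or in-degree $\geq 2$ in $G_{\alpha/\lambda}$ (a genuine branch point that cannot be a path), and then one exhibits explicit $\gamma,\gamma'$ with $\lambda(\gamma)=\lambda(\gamma')$ but unequal coefficients, mimicking the swapping/injection argument used in the proof of Theorem~\ref{the:symex}, or simply cites the non-symmetry half of the \cite{gps} classification.

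The main obstacle I anticipate is pinning down precisely which skew diagrams $\alpha/\lambda$ (with $\lambda$ a partition, $\alpha$ a composition, all rows nonempty, connected) give rise to path graphs $G_{\alpha/\lambda}$: one must carefully track how the left-to-right row edges and bottom-to-top column edges interact at shared cells. The subtlety is that a hook shape $(1^k,m-k)$ read as a skew shape over some $\lambda$ need not literally look like a hook as a diagram — one has to verify that the requirement ``$\alpha/\lambda=(1^k,m-k)$'' is exactly the condition making $G_{\alpha/\lambda}$ a single path. Here the hypothesis that $\lambda$ is a partition is essential (as in Theorem~\ref{the:symex}): it guarantees that whenever a cell $(i,j)$ lies above a skipped cell, all cells below it in that column are also skipped, which is what prevents stray column edges from creating branch vertices. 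Once this combinatorial lemma about $G_{\alpha/\lambda}$ being a path is established, both directions follow quickly from Lemma~\ref{lem:theta} and the \cite{gps} symmetry criterion.
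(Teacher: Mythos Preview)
Your overall framework---identifying $\bA\ex_{\alpha/\lambda}$ with the top $q$-coefficient of $X_{G_{\alpha/\lambda}}(x;q)$ via Lemma~\ref{lem:theta} and then appealing to \cite{gps}---is precisely what the paper does. But two steps in your plan do not go through as written. For the backward direction, the claim ``if $\alpha/\lambda=(1^k,|\alpha/\lambda|-k)$ then $\alpha$ is a partition'' is false: take $\alpha=(3,3,5)$ and $\lambda=(2,2,2)$, so that $\alpha/\lambda=(1,1,3)$ while $\alpha$ is not a partition. The paper instead first proves the $\lambda=\emptyset$ case directly (the argument is literally that of Theorem~\ref{the:symadI}, giving $\bA\ex_{(1^k,n-k)}=M_{(1^n)}=s_{(1^n)}$), and then observes that any $\alpha/\lambda$ of this hook form is a translate of such a straight composition, so the same argument applies verbatim.

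For the forward direction, you assume that \cite{gps} contains a full classification saying the top $q$-coefficient is symmetric iff $G$ is a directed path. The paper does not invoke such a statement and only uses \cite[Lemma~4.10]{gps}, which says that a \emph{connected} labelled acyclic digraph with \emph{more than one source} has non-symmetric top coefficient. The paper's argument is then: if $G_{\alpha/\lambda}$ has at least two sources, apply that lemma; if it has exactly one source, then (using that $\lambda$ is a partition and the diagram is connected with no empty rows) one deduces $\ell(\alpha)=\ell(\lambda)$ and $\lambda_1=\cdots=\lambda_{\ell(\lambda)}$, so $\alpha/\lambda$ is a translate of a straight composition shape and one is back in the $\lambda=\emptyset$ case. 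Your path-characterization route would still need to handle the single-source non-path case, either by locating a stronger result in \cite{gps} or by a direct argument---work the paper avoids entirely via the source-counting reduction.
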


\begin{proof} 
The proof for the case $\lambda=\emptyset$ is identical to that of the case $\beta = \emptyset$ for Theorem~\ref{the:symadI}, \JL{except we use Equation~\eqref{eq:sex} instead of Equation~\eqref{eq:sadI}.}

  \refy{Thus,} let us assume that $\lambda\neq \emptyset$. For one direction, if $ \alpha/\lambda = (1^k, |\alpha/\lambda|-k)$ \svw{for some $0\leq k \leq \finv{|\alpha/\lambda|-1}$,} then the symmetry of $\bA\ex _{\alpha/\lambda}$ follows by the first part of this proof. For the other direction, assume $\bA\ex _{\alpha/\lambda}$ is symmetric. Consider the labelled directed acyclic graph $G _{\alpha/\lambda}$. \ME{We consider 2 cases.}
   \begin{enumerate}
       \item   $G _{\alpha/\lambda}$ has exactly one source. It follows by the construction of $G_{\alpha/\lambda}$ that $\ell (\alpha)=\ell(\lambda)$ and  $\lambda_1=\cdots=\lambda_{\ell(\lambda)}=m$ for some $m$. Therefore $\alpha/\lambda$ is the composition $(\alpha_1 - m, \ldots, \alpha_{\ell(\alpha)} -m)$, and must be $(1^k, |\alpha/\lambda|-k)$ \svw{for some $0\leq k \leq \finv{|\alpha/\lambda|-1}$} by the first part of this proof.
       \item  $G _{\alpha/\lambda}$ has more than one source. Recall \svw{that since $\alpha/\lambda$ is connected we have} that $G _{\alpha/\lambda}$ is connected. Then it follows from~\cite[Lemma 4.10]{gps} that there exists some $\gamma, \gamma'\vDash|\alpha/\lambda| $  satisfying $\lambda(\gamma)=\lambda(\gamma')$, such that 
       $$[q^{|E|}M_\gamma]\svw{X_{G_{\alpha/\lambda}}}(x;q) \neq [q^{|E|}M_{\gamma'}]\svw{X_{G_{\alpha/\lambda}}}(x;q)$$that is, the coefficient of $M_\gamma$ and \svw{of} $M_{\gamma '}$ are different in  $[q^{|E|}]\svw{X_{G_{\alpha/\lambda}}}(x;q) = \bA\ex _{\alpha/\lambda}$. Hence, $\bA\ex _{\alpha/\lambda}$ is not symmetric, which contradicts our assumption.  \end{enumerate} \JL{With all cases considered, we are done.} 
\end{proof}

Since \JL{there is only one tableau in ${\svw{\mathcal{T}_{(1^k,  |\alpha/\lambda|-k)}}({\text{cols}<, \text{rows}<})}$ with composition content, namely,} the unique tableau whose cells are filled with $1, \ME{\ldots, k}$ in the leftmost column, and $k+1, \ldots , |\alpha/\lambda|$ in the top row, we get the following.

\begin{corollary}\label{cor:symsex} 
Let $\alpha$ be a composition and $\lambda$ be a partition, such that $\lambda \subseteq \alpha$ and $\bA\ex _{\alpha/\lambda}$ is symmetric. Then
 $$\bA\ex _{\alpha/\lambda} = s_{(1^{ |\alpha/\lambda|})}.$$
\end{corollary}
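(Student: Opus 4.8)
The plan is to prove Corollary~\ref{cor:symsex} by leveraging Theorem~\ref{the:symsex}: since we are given that $\bA\ex_{\alpha/\lambda}$ is symmetric, that theorem immediately tells us $\alpha/\lambda = (1^k, |\alpha/\lambda|-k)$ for some $0 \leq k \leq |\alpha/\lambda|-1$. So the only task remaining is to identify the function $\bA\ex_{\alpha/\lambda}$ for a diagram of this hook-like shape, and to check that it equals $s_{(1^{|\alpha/\lambda|})}$.

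First I would describe the set $\mathcal{T}_{(1^k,|\alpha/\lambda|-k)}({\text{cols}<, \text{rows}<})$ explicitly. A diagram of shape $(1^k, |\alpha/\lambda|-k)$ consists of a single column of $k+1$ cells with an additional row of $|\alpha/\lambda|-k-1$ cells attached to the right of the top cell — in other words, a hook with arm length $|\alpha/\lambda|-k$ (the top row) and leg length $k+1$ (the first column). In any $T$ in this set, the first-column entries strictly increase bottom to top and the top-row entries strictly increase left to right, with the corner cell shared. Hence $T$ is a standard-like filling: reading up the column and then along the row gives a strictly increasing sequence of $|\alpha/\lambda|$ entries. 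Any tableau whose content is a composition of $|\alpha/\lambda|$ (i.e.\ a filling using entries $1, 2, \ldots$ each positive, summing appropriately) must in fact use each of $1, \ldots, |\alpha/\lambda|$ exactly once, since strict increase along a path of $|\alpha/\lambda|$ cells forces distinct values and to have composition content (hence total degree $|\alpha/\lambda|$ with each part a positive power) the entries must be exactly $1$ through $|\alpha/\lambda|$ in order. This gives a unique such tableau: $1, \ldots, k$ filling the leftmost column from bottom to top, then $k+1, \ldots, |\alpha/\lambda|$ filling the top row from left to right (with $k$ in the corner if $k \geq 1$, or $1$ in the corner if $k=0$).

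Therefore $\bA\ex_{\alpha/\lambda} = \sum_{T} \xT$ where the sum is over this single tableau, so $\bA\ex_{\alpha/\lambda} = M_{(1^{|\alpha/\lambda|})}$. By Equation~\eqref{eq:schur}, $s_{(1^{|\alpha/\lambda|})}$ expands in monomial symmetric functions with a single term $m_{(1^{|\alpha/\lambda|})}$ with Kostka coefficient $1$ (the unique SSYT of column shape is the standard one), and $m_{(1^n)} = M_{(1^n)}$ since $(1^n)$ is the only composition rearranging to itself. Hence $\bA\ex_{\alpha/\lambda} = M_{(1^{|\alpha/\lambda|})} = m_{(1^{|\alpha/\lambda|})} = s_{(1^{|\alpha/\lambda|})}$, as claimed.

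This corollary is essentially immediate once Theorem~\ref{the:symsex} is in hand, so there is no real obstacle; the only point requiring a word of care is the claim that composition content forces a strictly increasing filling by $1, \ldots, |\alpha/\lambda|$ — this is exactly the observation already flagged in the sentence preceding the corollary statement, namely that there is only one tableau in $\mathcal{T}_{(1^k, |\alpha/\lambda|-k)}({\text{cols}<, \text{rows}<})$ with composition content. So the proof reduces to citing Theorem~\ref{the:symsex} for the shape and then invoking this uniqueness to conclude $\bA\ex_{\alpha/\lambda} = M_{(1^{|\alpha/\lambda|})} = s_{(1^{|\alpha/\lambda|})}$.
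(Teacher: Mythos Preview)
Your proof is correct and follows exactly the approach the paper takes: invoke Theorem~\ref{the:symsex} to reduce to the shape $(1^k,|\alpha/\lambda|-k)$, then observe that there is a unique tableau in $\mathcal{T}_{(1^k,|\alpha/\lambda|-k)}(\text{cols}<,\text{rows}<)$ with composition content, so $\bA\ex_{\alpha/\lambda}=M_{(1^{|\alpha/\lambda|})}=s_{(1^{|\alpha/\lambda|})}$. One small slip: the corner cell of the unique standard tableau carries $k+1$, not $k$ (the first column has $k+1$ cells filled by $1,\ldots,k+1$ bottom to top, and the top row is $k+1,\ldots,|\alpha/\lambda|$), but this does not affect the argument.
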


\subsection{Weakly \svw{advanced extended} functions}\label{subsec:wex}
We now come to the final variant.

\begin{definition}\label{def:wextab} Let $\alpha$ be a composition, and $\lambda$ be a partition such that $\lambda \subseteq \alpha$. Then $$\Twex$$is the set of all tableaux of {shape} $\alpha/\lambda$ such that
\begin{enumerate}
\item the column entries weakly increase from  bottom to top; 
\item the row entries weakly increase  from left to right.
\end{enumerate}
\end{definition}

\begin{example}\label{ex:wextab} The following tableau belongs to $\mathcal{T}_{(3,4,4,3,1)/(2,2,1)}({\text{cols}\leq, \text{rows}\leq})$.
$$\begin{ytableau}
3\\
1&4&5\\
\none&3&3&4\\
\none&\none&2&2\\
\none&\none&2
\end{ytableau}$$
\end{example}

Then for a composition $\alpha$ and partition $\lambda$ such that $\lambda \subseteq \alpha$ the \emph{weakly \svw{advanced extended} function} $\cA\ex _{\alpha/\lambda}$ is given by
\begin{equation}\label{eq:wex}
\cA\ex _{\alpha/\lambda} = \sum _{T\in \Twex} \xT = \sum _{\gamma \vDash |\alpha/\lambda|} wd _{(\alpha/\lambda)\gamma} M_\gamma
\end{equation}where $wd _{{(\alpha/\lambda)}\gamma}$ is the number of $T\in \Twex$ with content $\gamma$. The second equality follows by definition.

As given \refy{by Niese et al.} \cite[Figure 1]{NSvWVW2024}, for any composition $\alpha$ we have that
$$\psi(\bA\ex _\alpha) = \cA\ex _\alpha$$and the proof extends to diagrams $\alpha/\lambda$, for $\lambda$ a partition, to give
$$\psi(\bA\ex _{\alpha/\lambda}) = \cA\ex _{\alpha/\lambda}.$$Hence, by Theorem~\ref{the:symsex},  \svw{Corollary~\ref{cor:symsex} and Equation~\eqref{eq:transpose},} we get the following, respectively.

\begin{theorem}\label{the:symwex}
Let $\alpha$ be a composition and $\lambda$ be a partition, such that $\lambda \subseteq \alpha$. Then
$$\cA\ex _{\alpha/\lambda} \mbox{ is symmetric} \Leftrightarrow \alpha/\lambda = (1^k, |\alpha/\lambda|-k)  \mbox{ for some } 0\leq k \leq \finv{|\alpha / \lambda| -1.}$$
\end{theorem}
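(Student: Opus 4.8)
The plan is to deduce Theorem~\ref{the:symwex} from Theorem~\ref{the:symsex} by applying the involution $\psi$, exactly in the style of the passages for Theorems~\ref{the:symrdI}, \ref{the:symrex}, and \ref{the:symradI}. The one prerequisite I would record first is the identity $\psi(\bA\ex _{\alpha/\lambda}) = \cA\ex _{\alpha/\lambda}$ for $\lambda$ a partition. For the non-skew case this is \cite[Figure 1]{NSvWVW2024}; I would remark that the same argument extends verbatim to skew shapes $\alpha/\lambda$ with $\lambda$ a partition, since the relevant sign-reversing/weight-preserving involution on tableaux used there respects the skewing and only the column-versus-row strictness is toggled, which is precisely what distinguishes $\Tsex$ from $\Twex$. (If one preferred not to cite, one could alternatively invoke the general skew formula of Proposition~\ref{prop:skews} promised in Section~\ref{sec:skew}, but reusing the established $\psi$-relation is cleanest and matches the paper's pattern.)

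Next I would run the deduction. Since $\psi$ is an involution on $\QSym$ (indeed a Hopf algebra automorphism), $f \in \QSym$ is symmetric if and only if $\psi(f)$ is symmetric: this is immediate because $\psi$ restricts to an involution on $\Sym$, namely $\psi(s_{\lambda/\mu}) = s_{(\lambda/\mu)^t}$ by Equation~\eqref{eq:transpose}, so $\psi$ maps $\Sym$ into $\Sym$ and $\QSym \setminus \Sym$ into $\QSym \setminus \Sym$. Applying this with $f = \bA\ex _{\alpha/\lambda}$ and using $\psi(\bA\ex _{\alpha/\lambda}) = \cA\ex _{\alpha/\lambda}$, we get that $\cA\ex _{\alpha/\lambda}$ is symmetric if and only if $\bA\ex _{\alpha/\lambda}$ is symmetric. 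By Theorem~\ref{the:symsex}, the latter holds if and only if $\alpha/\lambda = (1^k, |\alpha/\lambda|-k)$ for some $0 \le k \le |\alpha/\lambda|-1$, which is exactly the claimed condition. That completes the proof.

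The main obstacle, such as it is, is purely bookkeeping: making sure the $\psi$-relation genuinely extends from straight shapes to skew shapes $\alpha/\lambda$ with $\lambda$ a partition, and that nothing about the connectedness/no-empty-row reductions from Section~\ref{sec:background} interferes. Both are routine — the reductions were already made globally in the excerpt, and the extension of $\psi(\bA\ex _\alpha) = \cA\ex _\alpha$ to skew shapes parallels the extensions already asserted for the dual immaculate and extended Schur families in the preceding sections. So there is no genuinely hard step; the theorem is a corollary of Theorem~\ref{the:symsex} via $\psi$, just as Theorem~\ref{the:symrex} is a corollary of Theorem~\ref{the:symex}.

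Concretely, the proof I would write is: \emph{By \cite[Figure 1]{NSvWVW2024}, $\psi(\bA\ex _\alpha) = \cA\ex _\alpha$ for any composition $\alpha$, and the proof extends to diagrams $\alpha/\lambda$ for $\lambda$ a partition, to give $\psi(\bA\ex _{\alpha/\lambda}) = \cA\ex _{\alpha/\lambda}$. Since $\psi$ is an involution on $\QSym$ fixing $\Sym$ setwise (by Equation~\eqref{eq:transpose}), $\cA\ex _{\alpha/\lambda}$ is symmetric if and only if $\bA\ex _{\alpha/\lambda}$ is symmetric, so the claim follows from Theorem~\ref{the:symsex}.} I would then append the analogue of Corollaries~\ref{cor:symsex} and \ref{cor:symrex}, namely that when symmetric, $\cA\ex _{\alpha/\lambda} = \psi(s_{(1^{|\alpha/\lambda|})}) = s_{(|\alpha/\lambda|)}$ by Equation~\eqref{eq:transpose}.
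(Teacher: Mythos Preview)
Your proposal is correct and matches the paper's approach essentially verbatim: the paper states $\psi(\bA\ex_{\alpha/\lambda}) = \cA\ex_{\alpha/\lambda}$ (citing \cite[Figure 1]{NSvWVW2024} and noting the extension to skew shapes), and then records Theorem~\ref{the:symwex} and Corollary~\ref{cor:symwex} as immediate consequences of Theorem~\ref{the:symsex}, Corollary~\ref{cor:symsex}, and Equation~\eqref{eq:transpose}. Your additional remark that $\psi$ fixes $\Sym$ setwise is the implicit justification the paper is using throughout.
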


\begin{corollary}\label{cor:symwex}
Let $\alpha$ be a composition and $\lambda$ be a partition, such that $\lambda \subseteq \alpha$ and $\cA\ex _{\alpha/\lambda}$ is symmetric. Then
 $$\cA\ex _{\alpha/\lambda} = s_{(|\alpha/\lambda|)}.$$
\end{corollary}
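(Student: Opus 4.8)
The plan is to deduce this corollary from the symmetry classification for strictly advanced extended functions together with the involution $\psi$, exactly paralleling how Corollary~\ref{cor:symrex} was obtained from Corollary~\ref{cor:symex}. Write $n = |\alpha/\lambda|$ for brevity throughout.

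First I would observe that the hypothesis is shared between the two variants. By Theorem~\ref{the:symwex}, the assumption that $\cA\ex_{\alpha/\lambda}$ is symmetric forces $\alpha/\lambda = (1^k, n-k)$ for some $0 \leq k \leq n-1$. This is precisely the condition appearing in Theorem~\ref{the:symsex} under which $\bA\ex_{\alpha/\lambda}$ is symmetric, so $\bA\ex_{\alpha/\lambda}$ is symmetric as well. Hence Corollary~\ref{cor:symsex} applies and gives $\bA\ex_{\alpha/\lambda} = s_{(1^{n})}$.

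Next I would transport this identity through $\psi$. Recall from the discussion preceding this corollary that $\psi(\bA\ex_{\alpha/\lambda}) = \cA\ex_{\alpha/\lambda}$. Applying $\psi$ to the previous equality therefore yields $\cA\ex_{\alpha/\lambda} = \psi(s_{(1^{n})})$. The shape $(1^{n})$ is a genuine (single-column) partition with $\mu = \emptyset$, so Equation~\eqref{eq:transpose} applies verbatim and gives $\psi(s_{(1^{n})}) = s_{((1^{n}))^t}$. Since the transpose of a single column of length $n$ is a single row of length $n$, that is $((1^{n}))^t = (n)$, I conclude $\cA\ex_{\alpha/\lambda} = s_{(n)} = s_{(|\alpha/\lambda|)}$, as claimed.

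There is no substantial obstacle here: the only two points needing care are (i) checking that the hook condition in Theorem~\ref{the:symwex} coincides with that in Theorem~\ref{the:symsex}, so that symmetry of $\cA\ex_{\alpha/\lambda}$ legitimately yields symmetry of $\bA\ex_{\alpha/\lambda}$ and hence access to Corollary~\ref{cor:symsex}; and (ii) confirming that Equation~\eqref{eq:transpose}, stated for skew shapes of two partitions, is applicable to $s_{(1^{n})}$, which it is because both $(1^{n})$ and $\emptyset$ are partitions. Everything else reduces to the elementary observation that transposing a column produces a row.
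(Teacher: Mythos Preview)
Your proof is correct and follows essentially the same route as the paper: use the relation $\psi(\bA\ex_{\alpha/\lambda}) = \cA\ex_{\alpha/\lambda}$ together with Corollary~\ref{cor:symsex} and Equation~\eqref{eq:transpose} to transport $s_{(1^{|\alpha/\lambda|})}$ to $s_{(|\alpha/\lambda|)}$. The only cosmetic difference is that you pass through Theorem~\ref{the:symwex} to verify the shape condition before invoking Corollary~\ref{cor:symsex}, whereas the paper treats both Theorem~\ref{the:symwex} and Corollary~\ref{cor:symwex} as simultaneous consequences of applying $\psi$.
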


\section{Skew quasisymmetric functions}\label{sec:skew}
In this \refz{section} we prove a general formula for skew functions in $\QSym$, which we then apply in order to justify the restriction \refz{to $\beta$ being a partition} for diagrams $\alpha/\beta$ when classifying when extended Schur functions and their advanced variants are symmetric. For this we will need to define skew functions, and another basis for $\QSym$ consisting of fundamental quasisymmetric functions, in addition to a few algebraic and combinatorial concepts.

Given a composition $\alpha$, the \emph{fundamental quasisymmetric function} $F_\alpha$ is given by
$$F_\alpha = \sum _{\beta \refines \alpha} M_\beta.$$

\begin{example}\label{ex:fbasis} \JL{We have that $F_{(1,2)} = M_{(1,2)}+ M_{(1,1,1)}.$}
\end{example}

For the fundamental quasisymmetric functions, we have that the coproduct in $\QSym$ is given by
\begin{equation}\label{eq:fundcoprod}
\Delta F_\alpha = \sum_{\substack{(\beta,\gamma) \text{ with }\\\beta\cdot \gamma = \alpha \text{ or}\\\beta\odot \gamma=\alpha}} F_\beta\otimes F_\gamma
\end{equation}
where for $\beta = (\beta_1,\ldots, \beta_{\svw{\ell(\beta)}})$ and $\gamma = (\gamma_1,\ldots, \gamma_{\svw{\ell(\gamma)}})$, $\beta\cdot\gamma = (\beta_1,\ldots, \beta_{\svw{\ell(\beta)}},\gamma_1,\ldots, \gamma_{\svw{\ell(\gamma)}})$ is the {\emph{concatenation}} of $\beta$ and $\gamma$,  and $\beta\odot\gamma = (\beta_1, \ldots, \beta_{{\svw{\ell(\beta)}}-1},\beta_{\svw{\ell(\beta)}}+\gamma_1,\gamma_2,\ldots, \gamma_{\svw{\ell(\gamma)}})$ is the {\emph{near-concatenation}} of $\beta$ and $\gamma$.

Before we continue, let $\{\Sk_\alpha\} _{\alpha\vDash n \geq 1}$ be a set of quasisymmetric functions defined as generating functions of tableaux satisfying certain conditions. 

\begin{definition}\label{def:skew} Let $\alpha$ be a composition, \refz{and $\Sk _\alpha$ be the quasisymmetric function defined as a generating function of tableaux of shape $\alpha$ satisfying certain conditions. Let $\Delta$ be the coproduct on $\QSym$.} Then \refz{the} \emph{skew quasisymmetric function} $\Sk_{\alpha/\beta}$ \refz{is the unique quasisymmetric function given} implicitly by
\[{\Delta\Sk_\alpha}= {\sum_{\ME{\beta \subseteq \alpha}} \Sk_\beta\otimes {\Sk_{\alpha/\beta}}}.\]
\end{definition}

We now prove a combinatorial formula for our skew quasisymmetric function. For this we will need the concept of descent set, which comes in four variants given a standard  tableau $T$. 
\begin{enumerate}
    \item {\cite[Definition 3.20]{BBSSZ2014}} The {\emph{dual immaculate descent set}} \svw{is}
    $$\Des_{\dI}(T)=\{i: i+1 \text{ is in a row strictly above $i$ in $T$}\};$$    
    \item {\cite[Defintion 3.4]{NSvWVW2023}} The {\emph{row-strict dual immaculate descent set}} \svw{is}
    $$\Des_{\rdI}(T)=\{i: i+1 \text{ is  in a row weakly below $i$ in $T$}\};$$
    \item {\cite[Definition 9.1]{NSvWVW2024}} The {\emph{$\mathcal{A}^*$-descent set}} \svw{is} $$\Des_{\mathcal{A}^*}(T)=\{i: i+1 \text{ is  in a row strictly below $i$ in $T$}\};$$
    \item {\cite[Definition 9.2]{NSvWVW2024}} The {\emph{$\bA^*$-descent set}} \svw{is} $$\Des_{\bA^*}(T)=\{i: i+1 \text{ is  in a row weakly above $i$ in $T$}\}.$$
\end{enumerate}Also recall that given a set $S=\ME{\{i_1 < i_2 < \cdots < i_s\}} \subseteq \{1, \ldots , n-1\}$, the composition corresponding to it is \ME{$\comp(S) = (i_1, i_2-i_1, \ldots, n-i_s) \vDash n$. }

\begin{example}\label{ex:alldes} Our tableau $T$ below has the following descent sets.
$$T=\tableau{5&8\\3&4&7&9\\1&2&6}$$ 
$$\begin{array}{rlrl}
\Des_{\dI}(T)&=\{ 2,4,6,7\} \quad&\comp(\Des_{\dI}(T)) &= (2,2,2,1,2)\\
\Des_{\rdI}(T)&=\{1, 3,5,8\} \quad&\comp(\Des_{\rdI}(T)) &= (1,2,2,3,1)\\
\Des_{\mathcal{A}^*}(T)&=\{5,8\} \quad&\comp(\JL{\Des_{\mathcal{A}^*}(T)}) &=(5,3,1)\\
\Des_{\bA^*}(T)&= \{1,2,3,4,6,7\} \quad&\comp(\JL{\Des_{\bA^*}(T)}) &= (1,1,1,1,2,1,2)
\end{array}$$
\end{example}

Lastly, we denote by $\JL{\mathcal{ST}} _{\alpha/\beta}\ME{(\mathrm{cols}, \mathrm{rows})}$  the set of all \JL{standard} tableaux of {shape} $\alpha/\beta$ subject to given conditions on the  columns and rows. We are now ready to give our combinatorial formula.

\begin{proposition}[Combinatorial skew formula]\label{prop:skews} Let $\alpha, \beta$ be compositions,   $\Des$ be one of the above descent sets, and $\Sk _\alpha = \sum_{U\in \JL{\mathcal{ST}}_{\alpha}\ME{(\mathrm{cols}, \mathrm{rows})}}F_{\comp(\Des(U))}$. Then
\[\Sk_{\alpha/\beta}=\sum_{T}F_{\comp(\Des(T))}\]
where the sum is over all {standard} tableaux $T\in \JL{\mathcal{ST}} _{\alpha/\beta}\ME{(\mathrm{cols}, \mathrm{rows})}$ for \ME{$\beta \subseteq \alpha$.} Furthermore, if the column restrictions on $\JL{\mathcal{ST}} _{\alpha/\beta}\ME{(\mathrm{cols}, \mathrm{rows})}$ have that certain columns must increase from bottom to top, then \refz{we get the following additional condition. If} a cell in such a column belongs to $\beta$ then every cell below it in that column must also belong to $\beta$. In particular, \refz{this additional condition guarantees that} if every column must increase from bottom to top, then $\beta$ is a partition.
\end{proposition}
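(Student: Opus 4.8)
The statement has two parts: a combinatorial formula for $\Sk_{\alpha/\beta}$ in terms of standard tableaux, and a structural constraint on $\beta$ forced by column conditions. I would prove the formula first by unravelling the defining identity $\Delta\Sk_\alpha = \sum_{\beta\subseteq\alpha}\Sk_\beta\otimes\Sk_{\alpha/\beta}$ of Definition~\ref{def:skew} together with the given expansion $\Sk_\alpha = \sum_{U\in\mathcal{ST}_\alpha(\mathrm{cols},\mathrm{rows})}F_{\comp(\Des(U))}$ and the coproduct formula \eqref{eq:fundcoprod}. The idea is: apply $\Delta$ to each $F_{\comp(\Des(U))}$, which by \eqref{eq:fundcoprod} breaks $U$ at each position $1\le k\le |\alpha|$ into the restriction $U|_{\{1,\dots,k\}}$ and $U|_{\{k+1,\dots,|\alpha|\}}$ (relabelled), producing $F_{\beta}\otimes F_{\gamma}$ where $\beta\cdot\gamma$ or $\beta\odot\gamma$ equals $\comp(\Des(U))$ depending on whether $k$ is a descent. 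One then collects terms by the shape of the lower piece: a standard tableau $U$ of shape $\alpha$ whose entries $\{1,\dots,k\}$ occupy a sub-shape $\beta$ (necessarily with $\beta\subseteq\alpha$, and $\beta$ inheriting the column/row conditions so $U|_{\{1,\dots,k\}}\in\mathcal{ST}_\beta$) decomposes as a pair consisting of that lower standard tableau and the upper part, which is a standard skew tableau of shape $\alpha/\beta$. Reading off the coefficient of $\Sk_\beta\otimes(-)$ in $\Delta\Sk_\alpha$ and matching with the defining identity then yields $\Sk_{\alpha/\beta} = \sum_{T\in\mathcal{ST}_{\alpha/\beta}(\mathrm{cols},\mathrm{rows})}F_{\comp(\Des(T))}$, once one checks $\Des$ of the skew tableau $T$ records exactly the near-concatenation-versus-concatenation data at positions inside $T$.

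**The delicate point in the formula.** The main obstacle is the bookkeeping of descent sets under restriction. When $U$ is split at $k$, the descents of $U$ in $\{1,\dots,k-1\}$ become descents of the lower tableau, the descents in $\{k+1,\dots,|\alpha|-1\}$ (shifted by $k$) become descents of the upper skew tableau $T$, and whether $k$ itself is a descent of $U$ is precisely what distinguishes $\beta\cdot\gamma$ (non-descent) from $\beta\odot\gamma$ (descent) in \eqref{eq:fundcoprod}. I need the chosen descent set $\Des$ (one of $\Des_{\dI},\Des_{\rdI},\Des_{\mathcal{A}^*},\Des_{\bA^*}$) to be "local" in the sense that it depends only on the relative row positions of consecutive entries, which is true for all four variants by inspection of their definitions; hence $\comp(\Des(T))$ for the skew tableau $T$ is exactly the $\gamma$ appearing. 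The linear independence of the $F_\alpha$ (they form a basis of $\QSym$) and of the tensors $\Sk_\beta\otimes F_\gamma$ — here one uses that $\{\Sk_\beta\}$ is itself a basis, which holds for $\dI$, $\rdI$, $\ex$, $\rex$, though not for the advanced variants; for the advanced variants the proposition should be read as asserting the displayed equality holds with $\Sk_{\alpha/\beta}$ defined by the tableau sum, consistent with the coproduct — lets us extract the coefficient unambiguously.

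**The structural part.** For the second assertion, suppose $\mathcal{ST}_{\alpha/\beta}(\mathrm{cols},\mathrm{rows})$ requires column $j$ to strictly increase bottom-to-top, and suppose some cell $(i,j)\in\beta$ while a cell $(i',j)$ with $i'<i$ lies in $\alpha/\beta$. I argue there is then no valid standard skew tableau of shape $\alpha/\beta$: actually the cleaner route is to go back to the coproduct derivation. Since the column-$j$ condition forces entries to increase up column $j$ in any standard tableau $U$ of shape $\alpha$, the set $\{1,\dots,k\}$ occupying a sub-shape $\beta$ with a "gap" in column $j$ (a $\beta$-cell with an $\alpha/\beta$-cell below it) is impossible: the cell below would carry a smaller entry than the cell above, yet the cell below is outside $\{1,\dots,k\}$ and the cell above is inside, contradicting that $\{1,\dots,k\}$ are the smallest $k$ entries. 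Hence such $\beta$ never arises as the shape of a lower restriction, so $\Sk_{\alpha/\beta}=0$ (empty tableau sum) for such $\beta$ — or, phrased as in the statement, the only $\beta$ contributing are those where $\beta$-membership in each strictly-increasing column is downward closed. Finally, if \emph{every} column must strictly increase, downward-closedness in every column is exactly the condition that $\beta_1\ge\beta_2\ge\cdots$, i.e.\ $\beta$ is a partition (recalling our standing assumption that every row of $\alpha/\beta$ is nonempty, so $\beta_i<\alpha_i$ throughout). I expect the first, descent-bookkeeping step to be the real work; the structural part is then a short consequence of the same restriction argument.
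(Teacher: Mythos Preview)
Your approach is essentially the paper's own: apply $\Delta$ to the $F$-expansion of $\Sk_\alpha$, split each standard tableau $U$ at every position $k$ into its lower restriction and the standardized upper part (the paper names these $\Omega_k(U)$ and $\mho_{n-k}(U)$), and then match against Definition~\ref{def:skew} to read off $\Sk_{\alpha/\beta}$; the structural claim about $\beta$ follows, as you say, from the fact that in a standard tableau the smallest $k$ entries cannot leave a gap in a strictly increasing column. One harmless slip: in \eqref{eq:fundcoprod} the concatenation case $\beta\cdot\gamma=\alpha$ corresponds to $k=|\beta|$ \emph{being} a descent (i.e.\ a part boundary of $\comp(\Des(U))$), and near-concatenation $\beta\odot\gamma=\alpha$ to $k$ \emph{not} being a descent, the reverse of what you wrote.
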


\begin{proof}We use the technique of \refy{Bessenrodt et al.} ~\cite[Proposition 3.1]{BLvW}. Let $T$ be a {standard} tableau such that $|T|=n$. For any $k$ with $0\leq k\leq n$, let $\mho_k(T)$ be the standardization of  $T$ \ME{restricted to the} entries $\{n-k+1,{\ldots,} n\}$, namely replace each $\JL{n-k+1} \leq i \leq n$ with $i-(n-k)$. Also let $\Omega_k(T)$ be the remaining cells of $T$ after removing the entries $\{k+1,{\ldots,} n\}$ as in Figure~\ref{fig:omskew}.

\begin{figure}
\[ T=\tableau{5&8\\3&4&7&9\\1&2&6} {\quad \Omega_5(T) = \tableau{5\\3&4\\1&2}\quad \mho_4(T)=\tableau{*&3\\*&*&2&4\\*&*&1} } \]
\caption{An example of {$\Omega_{n-k}(T)$ and $\mho_k(T)$}.}\label{fig:omskew}
\end{figure}

Note that if $T$ is a standard  tableau in $\JL{\mathcal{ST}} _{\alpha}\ME{(\mathrm{cols}, \mathrm{rows})}$, then $T=\Omega_{n-k}(T)\cup(\mho_k(T)+(n-k))$ where $\mho_k(T)+(n-k)$ is $\mho_k(T)$ with $n-k$ added to each entry.  Suppose $\comp(\Des(T)) = \tilde{\alpha}$ with $|\tilde{\alpha}|=n$.  Then we can rewrite~\eqref{eq:fundcoprod} as 
\[\Delta F_{\tilde{\alpha}} = \sum_{i=0}^n \ME{F_{\beta^i}\otimes F_{\gamma^i}}\] 
where \ME{$\beta^i$ and $\gamma ^i$} are the unique compositions such that \ME{$|\beta^i|={n-i}$, $|\gamma^i|={i}$,} and either \ME{$\beta^i\cdot\gamma^i=\tilde{\alpha}$} or \ME{$\beta^i\odot\gamma^i=\tilde{\alpha}$.} {Observe} that \ME{$\beta^i =\comp( \Des({\Omega_{n-i}}(T)))$} and \ME{$\gamma^i = \comp(\Des({\mho_{i}}(T)))$.}

Then
\begin{align*}
\Delta\Sk_\alpha
&=\Delta\left(\sum_{\svw{U}} F_{\comp(\Des({\svw{U}}))}\right)\\
&=\sum_{{\svw{U}}}\Delta F_{\comp(\Des({\svw{U}}))} \\
&=\sum_{{\svw{U}}}\sum^n_{ i=0}\ME{F_{\beta^i}\otimes F_{\gamma^i}}
\end{align*}
where ${\svw{U\in\ }}  \JL{\mathcal{ST}} _{\alpha}\ME{(\mathrm{cols}, \mathrm{rows})}$, \ME{$\beta^i =\comp( \Des({\Omega_{n-i}}(U)))$ and $\gamma^i = \comp(\Des({\mho_{i}}(U)))$.} 

Further, by Definition~\ref{def:skew} we have
\begin{align*}
\Delta \Sk_\alpha &=
\sum_\delta {\Sk_\delta\otimes {\Sk_{\alpha/\delta}}}\\ & = \sum_{\delta}{ \sum_S F_{\comp(\Des(S))}\otimes {\Sk_{\alpha/\delta}}  }
\end{align*} where $S \svw{\ \in\ } \JL{\mathcal{ST}} _{\delta}\ME{(\mathrm{cols}, \mathrm{rows})}$ \svw{for $\delta \subseteq \alpha$.} 
For a fixed $S$ of such a shape $\delta$ with $|\delta|=n-k$ for some $k$, there exists a standard  tableau $T$ in $\JL{\mathcal{ST}} _{\alpha}\ME{(\mathrm{cols}, \mathrm{rows})}$ such that $S=\Omega_{n-k}(T)$. Then $\mho_k(T)$ has shape $\alpha/\delta$.  Similarly, given a standard  tableau $T$ in $\JL{\mathcal{ST}} _{\alpha}\ME{(\mathrm{cols}, \mathrm{rows})}$, we have $T={\Omega_{n-k}(T)\cup(\mho_{k}(T)+(n-k))}$ where $\Omega_{n-k}(T)$ has shape \ME{$\delta \subseteq \alpha$} with $|\delta|=n-k$ and $\mho_{k}(T)$ has shape $\alpha/\delta$. Thus, \refz{since the coproduct expansion of $\Delta \Sk_\alpha$ is unique, we can compare the two right hand sides of the tensor product to obtain} 
\[\Sk_{\alpha/\delta}=\sum_{T}F_{\comp(\Des(T))}\]
where the sum is over all {standard} tableaux $T \ \svw{\in\ } \JL{\mathcal{ST}} _{\alpha/\delta}\ME{(\mathrm{cols}, \mathrm{rows})}$. \ME{Finally} note that by construction, if the column entries of certain columns in every contributing tableau increase from bottom to top because of the given condition, then we must have that if a cell belongs to $\delta$ then every cell below it in that column must also belong to $\delta$. In particular, if every column must increase from bottom to top, then $\delta$ is a partition, and we are done. \end{proof}

\JL{We end by observing that we recover our eight skew functions under consideration in Table~\ref{tab:8functions}.}

\begin{table}
\begin{tabular}[b]{|l|c|c|}\hline
descent set& $\mathcal{ST} _{\alpha/\delta}(\text{cols}, \text{rows})$& skew function\\ \hline
$\Des_{\dI}$&$\mathcal{ST} _{\alpha/\delta}(\text{1st col} <, \text{rows} <)$& $\dI _{\alpha/\delta}$\\
&&\\
&$\mathcal{ST} _{\alpha/\delta}(\text{cols} <, \text{rows} <)$&$\ex _{\alpha/\delta}$\\ \hline
$\Des_{\rdI}$&$\mathcal{ST} _{\alpha/\delta}(\text{1st col} <, \text{rows} <)$&$\rdI _{\alpha/\delta}$\\
&&\\
&$\mathcal{ST} _{\alpha/\delta}(\text{cols} <, \text{rows} <)$&$\rex _{\alpha/\delta}$\\ \hline
$\Des_{\mathcal{A}^*}$&$\mathcal{ST} _{\alpha/\delta}(\text{1st col} <, \text{rows} <)$& $\cA\dI _{\alpha/\delta}$ \\
&&\\
&$\mathcal{ST} _{\alpha/\delta}(\text{cols} <, \text{rows} <)$&$\cA\ex _{\alpha/\delta}$\\ \hline
$\Des_{\bA^*}$&$\mathcal{ST} _{\alpha/\delta}(\text{1st col} <, \text{rows} <)$&$\bA\dI _{\alpha/\delta}$\\
&&\\
&$\mathcal{ST} _{\alpha/\delta}(\text{cols} <,\text{rows} <)$&$\bA\ex _{\alpha/\delta}$\\ \hline
\end{tabular}
\caption{\finv{Eight skew functions recovered.}}\label{tab:8functions}
\end{table}

\section{Acknowledgments}\label{sec:ack}  
\refy{The authors would like to thank Shamil Asgarli, Tamsen Whitehead McGinley and Anna Pun for helpful conversations, and the referees for their deep and thoughtful comments.}

\bibliographystyle{plain}

\begin{thebibliography}{10}

\bibitem{ALvW2022} F.~Aliniaeifard, S. X.~Li \svw{and} S.~van Willigenburg, Schur functions in noncommuting
variables, \emph{Adv. Math.} 406, (2022).

\bibitem{AS2019} S.~ Assaf and D.~Searles, Kohnert polynomials, \svw{\emph{Exp. Math.}} 31, 93--119 (2022). 


\bibitem{allenetal} E.~Allen, J.~Hallam and S.~Mason, Dual immaculate quasisymmetric functions
expand positively into {Y}oung quasisymmetric {S}chur functions, \emph{J. Comb. Theory, Ser. A} 157, 70--108 (2018).


\bibitem{BBSSZ2014} C.~Berg, N.~Bergeron, F.~Saliola, L.~Serrano and M.~Zabrocki,  A lift of the {S}chur and {H}all-{L}ittlewood \refy{bases} to non-commutative symmetric functions, \emph{Canad. J. Math.} 66, 525--565 (2014).







\bibitem{BLvW} 
C.~Bessenrodt, K.~Luoto and S.~van Willigenburg, 
Skew quasisymmetric Schur functions and noncommutative Schur functions,
\emph{Adv. Math.}
226,
 4492--4532 (2011).
 
 \bibitem{BTvW} C.~Bessenrodt, V.~Tewari and  S.~van Willigenburg, {L}ittlewood-{R}ichardson rules for symmetric skew quasisymmetric {S}chur functions, \emph{J. Combin. Theory Ser. A} 13, 179--206 (2016).
 
\bibitem{CM2018} E.~Carlsson and A.~Mellit, A proof of the shuffle \svw{conjecture,} \emph{J. Amer. Math. Soc.} 31, 661--697 (2018).
 
 \bibitem{Campbell2014} J.~Campbell, K.~Feldman, J.~Light, P.~Shuldiner and Y.~Xu, A {S}chur-like basis of $\NSym$ defined by a Pieri rule, \emph{Electron. J. Combin.} 21, (2014).





\bibitem{D2024} S.~Daugherty, Extended {S}chur functions and bases related by involutions, \emph{Adv. in Appl. Math.}  161,  (2024).




\bibitem{E96} R.~Ehrenborg, On posets and Hopf algebras, \emph{Adv. Math.} 119, 1--25 (1996).






\bibitem{gps} M.~Gillespie, J.~Pappe and K.~Salois, When is the chromatic quasisymmetric function
symmetric? \refy{\emph{Algebr. Comb.} 8, 1169--1192 (2025).}




\bibitem{QS}
J.~Haglund, K.~Luoto, S.~Mason and S.~van Willigenburg, 
{{Quasisymmetric {S}chur functions}}, 
\emph{J. Combin. Theory Ser. A} 118,  463--490 (2011). 

\bibitem{LOPSvWW25}  N.~Lafreniere, R.~Orellana, A.~Pun, S.~Sundaram, S.~van Willigenburg and T.~Whitehead, The skew immaculate {H}ecke poset and $0$-{H}ecke modules, \emph{Electron. J. Combin.} 32, (2025).

\bibitem{LM2011} A.~Lauve and S.~Mason, $\QSym$ over $\Sym$ has a stable basis, \emph{J. Combin. Theory Ser. A} 118, 1661--1673 (2011).





\bibitem{LMvW} {K.~Luoto, S.~Mykytiuk and S.~van Willigenburg},
 {\em An introduction to quasisymmetric Schur functions - Hopf algebras, quasisymmetric functions, and Young composition tableaux},  
 Springer, 2013.
 

  \bibitem{mason-remmel}
S.~Mason and J.~Remmel,
{Row-strict quasisymmetric Schur functions},
\emph{Ann. Comb.} 18, 127--148 (2014).



\bibitem{monical}
C.~Monical,
Set-valued skyline fillings,
 \emph{S\'em. Lothar. Comb.} \svw{78B,} (2017).





\bibitem{NSvWVW2023} E.~Niese, S.~Sundaram, S.~van Willigenburg, J.~Vega and
\svw{S.~Wang,} Row-strict dual immaculate functions, \svw{\emph{Adv. in Appl. Math.}} 149, (2023).

\bibitem{NSvWVW2024} E.~Niese, S.~Sundaram, S.~van Willigenburg, J.~Vega and
S.~Wang, $0$-Hecke modules for row-strict dual immaculate \svw{functions,} \emph{Trans. Amer.
Math. Soc.} 377, 2525--2582 (2024).





\end{thebibliography}

\def\cprime{$'$}

\end{document}